\providecommand{\U}[1]{\protect\rule{.1in}{.1in}}
\newtheorem{theorem}{Theorem}[section]
\newtheorem*{theorem*}{Theorem}
\theoremstyle{plain}
\newtheorem{lemma}{Lemma}[section]
\newtheorem{remark}{Remark}[section]
\numberwithin{equation}{section}
\def\dim{\operatorname{dim}}
\begin{document}

\title[The optimal constant in the $L^{2}$ Folland-Stein inequality]{The optimal constant in the $L^{2}$ Folland-Stein inequality on the H-type group}


\author{Qiaohua  Yang}
\address{School of Mathematics and Statistics, Wuhan University, Wuhan, 430072, People's Republic of China}

\email{qhyang.math@whu.edu.cn}

\thanks{The work was partially supported by the National Natural
Science Foundation of China(No.11201346).}


\subjclass[2000]{Primary:  43A80; 46E35; 22E25.}



\keywords{Folland-Stein inequality; Heisenberg group; H-type group; best constant}

\begin{abstract}
We  determine the optimal constant in the $L^{2}$ Folland-Stein inequality on the H-type group, which
  partially confirms the conjecture
given by  Garofalo and  Vassilev (Duke Math. J., 2001). The proof is inspired by the work of Frank and Lieb (Ann.
of Math., 2012)
and Hang and Wang.
  \end{abstract}

\maketitle


\section{Introduction}
Let $G$ be a stratified,  simply connected nilpotent Lie group (in short  a Carnot group) of
step $r$. Denote by $\mathfrak{g}$ the  Lie algebra of $G$. It is known that
$\mathfrak{g}=\bigoplus^{r}_{i=1}V_{i}$ satisfying (see e.g. \cite{fs3})
$$[V_{1},V_{j}]=V_{j+1}, \; 1\leq j\leq r-1;\;\; [V_{1},V_{r}]=\{0\}.$$
 As a simply connected
nilpotent group, $G$ is differential with $\mathbb{R}^{N}$,
$N=\sum^{r}_{i=1}\dim V_{i}$, via the
exponential map $\exp: \mathfrak{g}\rightarrow G$. There is a natural family
of nonisotropic dilations $\delta_{\lambda}: \mathfrak{g}\rightarrow\mathfrak{g}$ for  $\lambda>0$  and we define it as follows:
\begin{align*}
\delta_{\lambda}(X_{1}+X_{2}+\cdots+X_{r})=\lambda X_{1}+\lambda^{2}X_{2}+\cdots+\lambda^{r}X_{r}, \;\; X_{j}\in V_{j}, \; 1\leq j\leq r.
\end{align*}
The homogeneous dimension of $G$,  associated with $\delta_{\lambda}$, is   $Q=\sum_{j=1}^{r}j\dim V_{j}$.
Via the exponential map $\exp : \mathfrak{g} \rightarrow G$, we  define the
group of dilations on $G$ as follows:
\begin{align*}
\delta_{\lambda}(g)=\exp\circ \delta_{\lambda}\circ\exp^{-1}(g),\;\; g\in G.
\end{align*}

 Set $n_{j}=\dim V_{j}$, $1\leq
j\leq r$. Let $\{X_{1},\cdots, X_{n_{1}}\}$ be a basis of $V_{1}$
and denote by $\nabla_{G}=(X_{1},\cdots, X_{n_{1}})$ the horizontal gradient of $G$.
The sub-Laplacian on $G$ is
$\Delta_{G}=\sum_{i=1}^{n_{1}}X^{2}_{i}.$
The Sobolev space $W_{0}^{1,p}(G)$ is the  closure of $C_{0}^{\infty}(G)$
 with respect to the norm
\begin{align*}
\|u\|_{W_{0}^{1,p}(G)}=\left(\int_{G}|\nabla_{G}u|^{p}dg\right)^{\frac{1}{p}},
\end{align*}
where $dg$ is the Haar measure on $G$.
We remark that the Haar measure
on $G$,  induced by the exponential mapping from the Lebesgue
measure on $\mathfrak{g}=\mathbb{R}^{N}$,  coincides the Lebesgue
measure on $\mathbb{R}^{N}$.
The Folland-Stein inequality on $G$ reads that there exits some constant $C>0$ such that for each $u\in W_{0}^{1,p}(G)$ (see \cite{fs1,fs2}),
\begin{align}\label{1.1}
 \left(\int_{G}|u|^{\frac{pQ}{Q-p}}dg\right)^{\frac{Q-p}{pQ}}\leq C\left(\int_{G}|\nabla_{G}u|^{p}dg\right)^{\frac{1}{p}},\;\;1<p<Q.
\end{align}
For the existence and  regularity of minimizers  of the Folland-Stein
inequality (\ref{1.1}), we refer to \cite{va}.

The Heisenberg group is the simplest   example  of Carnot group of step 2.  We denote it by  $\mathbb{H}^{n}=(\mathbb{C}^{n}\times\mathbb{R},\circ)$.
The group law on $\mathbb{H}^{n}$ is given by
\begin{align*}
(z,t)\circ (z',t')=(z+z',t+t'+2\textrm{Im}z\cdot z'),
\end{align*}
where $z\cdot z'=\sum_{j=1}^{n}z_{j}\bar{z}_{j}'$.
The  homogeneous norm on $\mathbb{H}^{n}$ is given by
\begin{align*}
|(z,t)|=(|z|^{4}+t^{2})^{\frac{1}{4}}.
\end{align*}
In a series of  papers \cite{jl1,jl2,jl3},
 Jerison and Lee, among other results,  determined the explicit computation of the extremal functions in (\ref{1.1}) in the case $p=2$ and $G=\mathbb{H}^{n}$.
In fact, the extremal functions are, up to group translations and dilations,
\begin{align*}
c((1+|z|^{2})^{2}+t^{2})^{-\frac{Q-2}{4}},\;\; c\in\mathbb{R}.
\end{align*}
 Such inequalities play an important role in the  study of CR Yamabe problems.
Later, in a celebrated paper \cite{fl1}, Frank and  Lieb established sharp Hardy-Littlewood-Sobolev inequalities on $\mathbb{H}^{n}$.
We state the result  as follows:
\begin{theorem}[Frank-Lieb]\label{th1.1}
Let $0<\lambda<Q$ and $p=\frac{2Q}{Q-\lambda}$. Then for any $f,g\in L^{p}(\mathbb{H}^{n})$,
\begin{align*}
\left|\int\int_{\mathbb{H}^{n}\times \mathbb{H}^{n}}\frac{\overline{f(z,t)}g(z',t')}{|(z,t)^{-1}\circ (z',t')|^{\lambda}}dzdtdz'dt'\right|
\leq \left(\frac{\pi^{n+1}}{2^{n-1}n!}\right)^{\lambda/Q}\frac{n!\Gamma(\frac{Q-\lambda}{2})}{\Gamma^{2}(\frac{2Q-\lambda}{4})}\|f\|_{p}\|g\|_{p},
\end{align*}
with equality if and only if, up to group translations and dilations,
\begin{align*}
f=c((1+|z|^{2})^{2}+t^{2})^{-\frac{2Q-\lambda}{4}},\;\;g=c'((1+|z|^{2})^{2}+t^{2})^{-\frac{2Q-\lambda}{4}}
\end{align*}
for some $c,c'\in\mathbb{C}$.
\end{theorem}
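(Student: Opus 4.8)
We outline the strategy of Frank and Lieb.

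\emph{Transfer to the CR sphere and reduction to one function.} The plan is to carry everything over to the CR sphere $\mathbb{S}^{2n+1}\subset\mathbb{C}^{n+1}$ via the Cayley transform $\mathcal{C}\colon\mathbb{H}^n\to\mathbb{S}^{2n+1}\setminus\{\mathrm{pt}\}$, a CR isomorphism onto the complement of a point that conjugates the Heisenberg translations and dilations with the CR automorphism group of $\mathbb{S}^{2n+1}$. Since $p$ is the conformally invariant exponent, absorbing the conformal Jacobians turns the left-hand side into a fixed multiple of $\iint_{\mathbb{S}^{2n+1}\times\mathbb{S}^{2n+1}}|1-\zeta\cdot\bar\eta|^{-\lambda/2}\,\overline{F(\zeta)}\,G(\eta)\,d\zeta\,d\eta$, with $\|f\|_{L^p(\mathbb{H}^n)}=\|F\|_{L^p(\mathbb{S}^{2n+1})}$ for the conformally weighted pullback $F$ of $f$; indeed the conformally weighted pullback of the constant function on $\mathbb{S}^{2n+1}$ is precisely $c\big((1+|z|^2)^2+t^2\big)^{-\frac{2Q-\lambda}{4}}$. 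So it suffices to prove the corresponding inequality on $\mathbb{S}^{2n+1}$ and to show its extremizers are the constants. Finally, since the kernel is positive, $\mathrm{Re}\,B(F,G)$ only grows under $F\mapsto|F|$, $G\mapsto|G|$, and since $|1-\zeta\cdot\bar\eta|^{-\lambda/2}$ is positive-definite (the eigenvalues $\lambda_{j,k}$ below are positive), $B(F,G)\le\tfrac12\big(B(F,F)+B(G,G)\big)$; hence the optimal constant is attained on the diagonal, and the problem reduces to maximizing $\mathcal{Q}[F]:=B(F,F)/\|F\|_p^2$ over $0\le F\in L^p(\mathbb{S}^{2n+1})$ and classifying the maximizers.

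\emph{Existence, Euler--Lagrange equation, spectrum, and the constant.} A maximizing sequence can lose compactness only by concentrating at a point, but $\mathcal{Q}$ is invariant under the noncompact conformal group of $\mathbb{S}^{2n+1}$; re-spreading a concentrating sequence by conformal maps (a concentration--compactness argument) therefore yields a nonnegative maximizer $F$, which after scaling solves
\[
\int_{\mathbb{S}^{2n+1}}|1-\zeta\cdot\bar\eta|^{-\lambda/2}\,F(\eta)\,d\eta=F(\zeta)^{p-1},
\]
and a subelliptic bootstrap makes $F$ smooth and bounded below (equivalently $v=F^{p-1}$ solves a critical conformally covariant equation $Av=v^{p'-1}$, with $p'$ the conjugate exponent, for the CR intertwining operator $A$ of order $Q-\lambda$ inverting the kernel). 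Decomposing $L^2(\mathbb{S}^{2n+1})=\bigoplus_{j,k}\mathcal{H}_{j,k}$ into CR spherical harmonic spaces, the integral operator is diagonal, and a Funk--Hecke type computation gives its eigenvalue on $\mathcal{H}_{j,k}$ as an explicit quotient of Gamma functions $\lambda_{j,k}$; one finds $\lambda_{0,0}$ strictly largest, the next value being attained on the ``conformal'' modes $\mathcal{H}_{1,0}\oplus\mathcal{H}_{0,1}=\mathrm{span}\{\zeta_i,\bar\zeta_i\}$. Evaluating $\mathcal{Q}$ at $F\equiv1$ involves only $\lambda_{0,0}$ and $|\mathbb{S}^{2n+1}|$ and reproduces the constant asserted in the theorem, so the entire content is that constants maximize $\mathcal{Q}$.

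\emph{Rigidity.} To show the maximizer is constant I would use the conformal group a second time, normalizing $F$ so its center of mass vanishes, $\int_{\mathbb{S}^{2n+1}}\zeta\,F(\zeta)^p\,d\zeta=0\in\mathbb{C}^{n+1}$, which is achievable by a degree/fixed-point argument over the ball of conformal parameters. For the normalized maximizer one expands $F=\sum_{j,k}Y_{j,k}$, uses the Euler--Lagrange equation in the form $B(F,F)=\langle F,F^{p-1}\rangle=\|F\|_p^p$, and plays the strict spectral gaps $\lambda_{0,0}>\lambda_{j,k}$ for $(j,k)\ne(0,0)$ against the center-of-mass condition (which removes the contribution of the conformal modes) and against the power nonlinearity $t\mapsto t^{p-1}$, forcing $Y_{j,k}=0$ for every $(j,k)\ne(0,0)$; thus $F$ is constant. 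Undoing $\mathcal{C}$ returns $c\big((1+|z|^2)^2+t^2\big)^{-\frac{2Q-\lambda}{4}}$ and $c'\big((1+|z|^2)^2+t^2\big)^{-\frac{2Q-\lambda}{4}}$, completing the proof.

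\emph{Where the difficulty sits.} The rigidity step is the real obstacle. Because $p\ne2$ the right-hand side $\|F\|_p^2$ is genuinely nonlinear, so one cannot merely diagonalize the operator --- diagonalization followed by H\"older runs in the wrong direction. Equivalently, the step is the classification of positive solutions of the critical CR-covariant equation $Av=v^{p'-1}$ on $\mathbb{S}^{2n+1}$; for order $2$ this is the CR--Obata theorem of Jerison and Lee, but for general $\lambda$ no comparable rigidity theorem is available, and the substitute --- a rearrangement-free argument resting on the conformal normalization and the explicit, strictly monotone spectrum --- is precisely the contribution of Frank and Lieb (later streamlined by Hang and Wang). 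By comparison, computing the $\lambda_{j,k}$ and checking that $\mathcal{Q}[1]$ equals $\left(\frac{\pi^{n+1}}{2^{n-1}n!}\right)^{\lambda/Q}\frac{n!\,\Gamma(\frac{Q-\lambda}{2})}{\Gamma^2(\frac{2Q-\lambda}{4})}$ is lengthy but routine within the CR spherical-harmonic calculus.
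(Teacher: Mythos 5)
The paper does not prove Theorem \ref{th1.1} at all: it is quoted from Frank and Lieb \cite{fl1} purely as background, so there is no in-paper proof to compare your attempt against. Measured against the actual Frank--Lieb argument, your outline is faithful in its main lines: Cayley transform to the CR sphere, reduction to the diagonal via positivity and positive-definiteness of the kernel, existence of a maximizer modulo the conformal group, Funk--Hecke diagonalization with eigenvalues strictly decreasing in $(j,k)$, the Hersch-type zero-center-of-mass normalization, and a spectral-gap rigidity argument; and you correctly identify the rigidity step as the genuine content. One substantive correction: your description of that step --- ``expand $F=\sum_{j,k}Y_{j,k}$ and force $Y_{j,k}=0$ for $(j,k)\neq(0,0)$'' --- is not how the argument can close, precisely because $p\neq 2$ makes a mode-by-mode comparison impossible (as your own closing paragraph concedes). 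What Frank and Lieb actually do is insert the specific test functions $\zeta_iF$ and $\bar\zeta_iF$, which are admissible for the second-variation inequality exactly because of the center-of-mass normalization $\int\zeta\,F^p=0$, sum over $i$ using $\sum_i|\zeta_i|^2=1$ together with an algebraic identity for the kernel, and only then invoke the eigenvalue gap; the two halves of your write-up should be reconciled into this single second-variation computation. Two further remarks. First, the exponent in the paper's statement should read $p=\tfrac{2Q}{2Q-\lambda}$ (the scale-invariant choice), not $\tfrac{2Q}{Q-\lambda}$; your own computation of the pullback of the constant as $c((1+|z|^2)^2+t^2)^{-\frac{2Q-\lambda}{4}}$ is consistent only with the former. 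Second, the machinery the present paper actually builds (Lemma \ref{lm3.3} as the orthogonality to the conformal modes, Lemma \ref{lm3.4} as the summed second-variation identity, and the subcritical limit $p\nearrow\frac{2Q}{Q-2}$) is the Hang--Wang variant of this very scheme, which dispenses with the existence proof and the Hersch normalization that your outline still relies on; noting that parallel would strengthen your sketch.
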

In particular, choosing $\lambda=Q-2$ in Theorem \ref{th1.1} yields the Jerison and Lee's inequality. Using the method in \cite{fl1},  Frank and Lieb \cite{fl2}
  also gave a new, rearrangement-free proof of
sharp Hardy-Littlewood-Sobolev inequalities  on $\mathbb{R}^{n}$.
Recently, Hang and Wang \cite{hang} present a shorter proof of the Frank-Lieb inequality, in  which
they bypasses the subtle proof of existence and the Hersch-type argument via subcritical approximation.

Some of the results of Theorem \ref{th1.1}  have been generalized to the cases  of  quaternionic Heisenberg group and octonionic Heisenberg group (see \cite{chz1,chz2,imv1,imv2}).
We note that Heisenberg group,  quaternionic Heisenberg group and octonionic Heisenberg group are known as the groups of Iwasawa type, i.e.,
the nilpotent component in the
Iwasawa decomposition of simple groups of rank one (see e.g. \cite{cdk}).

The aim of  this paper is to look for the  optimal constant of (\ref{1.1}) when $p=2$ and $G$ is a group of Heisenberg type (in short a H-type group).
Recall that
a H-type group $G$ is a
Carnot group of step two with the following properties (see Kaplan \cite{k}): the Lie
algebra $\mathfrak g$ of $G$ is endowed with an inner product $\langle,\rangle$
such that, if $\mathfrak z$ is the center of $\mathfrak g$, then
$[\mathfrak z^{\perp}, \mathfrak z^{\perp}]=\mathfrak z$ and
moreover, for every fixed $z\in \mathfrak z$, the map $J_{z}:
\mathfrak z^{\perp}\rightarrow \mathfrak z^{\perp}$ defined by
\begin{eqnarray}\label{1.2}
\langle J_{z}(v),\omega\rangle=\langle z,[v,\omega]\rangle,\;\;\forall \omega\in\mathfrak
z^{\perp}
\end{eqnarray}
is an orthogonal map whenever $\langle z,z\rangle=1$.
It is known (see \cite{cdk}) that a H-type group $G$ is the  group of Iwasawa  type if and only if its Lie algebra satisfies the following $J^{2}$-condition:
for any $v\in \mathfrak{z}^{\perp}$  and $z,z'\in \mathfrak{z}$ such that $\langle z,z'\rangle=0$,  there exists $z''\in \mathfrak{z}$
such that
\begin{align*}
  J_{z}J_{z'}v=J_{z''}v.
\end{align*}
Therefore,  most of H-type groups are not groups of Iwasawa  type.

  Set $m=\dim \mathfrak z^{\perp}$ and $n=\dim \mathfrak z$. Since $G$ has step two, we
can fix on $G$ a system of coordinates $(x,t)$ such that the group
law on $G$ has the form (see \cite{bu})
\begin{equation}\label{1.3}
\begin{split}
(x,t)\circ (x',t')=\begin{pmatrix}
  x_{i}+x'_{i},\;\; i=1,2,\cdots, m \\
  t_{j}+t'_{j}+\frac{1}{2}\langle x,U^{(j)}x'\rangle,\;\;j=1,2,\cdots, n\\
\end{pmatrix}
\end{split}
\end{equation}
for suitable skew-symmetric matrices $U^{(j)}$'s. Nextly,  we
 set
\begin{align}\label{2.2}
U(\xi)=&\left[\left(1+\frac{|x|^{2}}{4}\right)^{2}+|t|^{2}\right]^{-\frac{Q-2}{4}},\;\xi=(x,t)\in
G;\\
U_{\lambda,\eta}(\xi)=&\lambda^{\frac{Q-2}{2}}U(\delta_{\lambda}(\eta^{-1}\circ\xi)),\;\;\;\;\;\;\;\;\;\;\;\;\; \eta\in G. \label{2.3}
\end{align}
It has been shown that $\big[\frac{m(Q-2)}{4}\big]^{\frac{Q-2}{4}}U_{\lambda,\eta}(\xi)$ satisfies the  Yamabe-type equation (see \cite{gv1,gv2})
\begin{eqnarray*}
\Delta_{G}\Big[\frac{m(Q-2)}{4}\Big]^{\frac{Q-2}{4}}U_{\lambda,\eta}+\left\{\Big[\frac{m(Q-2)}{4}\Big]^{\frac{Q-2}{4}}U_{\lambda,\eta}\right\}^{\frac{Q+2}{Q-2}}=0,
\end{eqnarray*}
or equivalently,
\begin{align}\label{2.4}
\Delta_{G}U_{\lambda,\eta}+\frac{m(Q-2)}{4}U_{\lambda,\eta}^{\frac{Q+2}{Q-2}}=0.
\end{align}
In the paper \cite{gv1}, Garofalo and  Vassilev gave the
following conjecture:
\vspace{0.1cm}

\textbf{Conjecture}\;(Garofalo-Vassilev).
In a H-type group $G$, the functions  $\big[\frac{m(Q-2)}{4}\big]^{\frac{Q-2}{4}}U_{\lambda,\eta}(\xi)$  are the only nontrivial entire
solutions to
\begin{align*}
\left\{
  \begin{array}{ll}
 \Delta_{G}u+u^{\frac{Q+2}{Q-2}}=0   ,  \\
  u\in W^{1,2}_{0}(G), u\geq0.
  \end{array}
\right.
\end{align*}
\vspace{0.1cm}

If the conjecture is true, then one can obtain the  optimal constant of $L^{2}$ Folland-Stein inequality on H-type groups.  In  this paper we shall
use the method given by Frank and Lieb \cite{fl1,fl2} and Hang and Wang \cite{hang} to determine the optimal constant, instead of proving the conjecture directly.
To this end, we have
\begin{theorem}\label{th1.2}
It holds that
\begin{align}\label{1.4}
\int_{G}|\nabla_{G}u|^{2}dxdt\geq S_{m,n} \left(\int_{G}|u|^{\frac{2Q}{Q-2}}dxdt\right)^{\frac{Q-2}{Q}}, \; u\in W^{1,2}_{0}(G),
\end{align}
where
\begin{align*}
S_{m,n}=4^{-\frac{2n}{Q}}m(Q-2)\pi^{\frac{m+n}{Q}}\left(\frac{\Gamma(\frac{m+n}{2})}{\Gamma(m+n)}\right)^{2/Q}.
\end{align*}
The inequality is sharp and an extremal function  is
\begin{align*}
U(x,t)=\left[\left(1+\frac{|x|^{2}}{4}\right)^{2}+|t|^{2}\right]^{-\frac{Q-2}{4}}.
\end{align*}
\end{theorem}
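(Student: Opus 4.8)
The plan is to follow the Frank–Lieb/Hang–Wang strategy and pass to a dual formulation on a compact model. First I would observe that, via the Cayley transform, the $L^{2}$ Folland–Stein inequality \eqref{1.4} is equivalent to a Hardy–Littlewood–Sobolev-type inequality with kernel $|\xi^{-1}\circ\xi'|^{-(Q-2)}$ on $G$, i.e. to the $\lambda=Q-2$ endpoint of an inequality of the shape appearing in Theorem \ref{th1.1}; the Green function of $\Delta_{G}$ on an H-type group is an explicit constant multiple of $|\xi|^{-(Q-2)}$ by the Folland fundamental-solution formula, so the equivalence of sharp constants is a formal Legendre-type duality. Thus it suffices to prove the dual inequality
\begin{equation*}
\left|\int_{G}\int_{G}\frac{\overline{f(\xi)}\,g(\xi')}{|\xi^{-1}\circ\xi'|^{Q-2}}\,d\xi\,d\xi'\right|\le C_{m,n}\,\|f\|_{\frac{2Q}{Q+2}}\,\|g\|_{\frac{2Q}{Q+2}},
\end{equation*}
with the constant $C_{m,n}$ dual to $S_{m,n}$ and with extremizer the stated $U$ (which, up to the Green-function normalization, solves \eqref{2.4}). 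By symmetry and a density argument one may take $f=g$ and $f\ge 0$.

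Next I would carry the problem to a compact picture. Unlike the Iwasawa-type case, a general H-type group is not the nilpotent part of a rank-one symmetric space, so there is no genuine sphere available; instead I would use the one-point compactification $S=G\cup\{\infty\}$ equipped with the conformally flat "spherical" metric obtained by the inversion $\xi\mapsto\xi^{-1}$ composed with the dilation structure, exactly as in \cite{fl1} but keeping track of the center dimension $n$. The kernel $|\xi^{-1}\circ\xi'|^{-(Q-2)}$ pulls back, under stereographic projection, to a constant times $d(\zeta,\zeta')^{-(Q-2)}$ where $d$ is the natural distance on $S$, and the conformal weights convert the $L^{\frac{2Q}{Q+2}}(G)$ norm into the corresponding norm on $S$. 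The inequality to prove becomes: for $F\ge 0$ on $S$,
\begin{equation*}
\int_{S}\int_{S}\frac{\overline{F(\zeta)}\,F(\zeta')}{d(\zeta,\zeta')^{Q-2}}\,d\mu(\zeta)\,d\mu(\zeta')\le C_{m,n}'\,\|F\|^{2}_{L^{\frac{2Q}{Q+2}}(S,\mu)},
\end{equation*}
with constant functions as extremizers.

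Then comes the analytic core, the Hang–Wang argument. Writing the problem as maximizing the quadratic form $\Phi(F)=\langle F, \mathcal{I}F\rangle$ (with $\mathcal{I}$ the integral operator with kernel $d(\zeta,\zeta')^{-(Q-2)}$) over the unit ball of $L^{\frac{2Q}{Q+2}}$, I would not prove existence of a maximizer directly; instead, following \cite{hang}, I would take a maximizing sequence, use the conformal (CR-type) group of symmetries of $(S,d)$ to recenter it — a Hersch-type balancing that here is replaced by the cleaner argument of Hang and Wang — and then pass to a limit. The key step is the computation of the spectrum of $\mathcal{I}$ acting on the relevant function spaces on $S$: one expands in the eigenfunctions of the sub-Laplacian (or of the conformally covariant operator of order $2$), computes the eigenvalues $\lambda_{k}$ of $\mathcal{I}$, and verifies the crucial inequality that, among functions orthogonal to the constants and to the first "conformal" modes, the quadratic form is strictly dominated by its value on constants; equivalently $\lambda_{1}<\lambda_{0}$ with a quantitative gap, and the first-mode directions are exactly absorbed by the conformal group action. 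Combined with the competitor that the explicit $U$ furnishes, this forces the maximizer to be constant and pins down $S_{m,n}$; evaluating the quadratic form on the constant $F\equiv 1$, i.e. computing $\int_{S}d(\zeta,\zeta_{0})^{-(Q-2)}d\mu(\zeta)$ as a Beta-function integral and tracking the conformal Jacobians, yields the displayed value $S_{m,n}=4^{-2n/Q}\,m(Q-2)\,\pi^{(m+n)/Q}(\Gamma(\tfrac{m+n}{2})/\Gamma(m+n))^{2/Q}$.

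The main obstacle I anticipate is precisely this spectral step in the H-type generality: for the Heisenberg and Iwasawa-type groups one has an honest symmetric-space structure and the eigenfunction expansion of the relevant intertwining operator is classical, whereas for a general H-type group the absence of the $J^{2}$-condition means one must work with the conformal infinitesimal symmetries coming from \eqref{2.4} alone and compute the operator $\mathcal{I}$'s eigenvalues by hand, checking the sign of $\lambda_{0}-\lambda_{1}$ and the exactness of the first-mode/conformal-group matching without the representation-theoretic machinery. A secondary technical point is making the Cayley transform and the compactification rigorous when $n>1$ and $S$ is not a group, including the behavior at $\infty$ and the precise conformal factor; but once the kernel and measure transformations are pinned down, the remainder is the Hang–Wang limiting argument carried out verbatim, and the final constant drops out of an elementary integral. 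This is why the theorem only "partially confirms" the Garofalo–Vassilev conjecture: it identifies the sharp constant and one extremal family without classifying all nonnegative $W^{1,2}_{0}$ solutions of $\Delta_{G}u+u^{(Q+2)/(Q-2)}=0$.
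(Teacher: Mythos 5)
There is a genuine gap, and it sits exactly where you anticipate it: the ``spectral step'' on the compactified model. Your plan reduces the theorem to the sharp Hardy--Littlewood--Sobolev inequality with kernel $|\xi^{-1}\circ\xi'|^{-(Q-2)}$ on a general H-type group, to be proved by pulling back to a compact picture and diagonalizing the intertwining operator $\mathcal{I}$ in eigenfunctions of a conformally covariant operator. That machinery exists only when $G$ is of Iwasawa type (Heisenberg, quaternionic, octonionic), because then $G\cup\{\infty\}$ is a genuine homogeneous sphere with a large conformal group and the eigenvalues of $\mathcal{I}$ are computable by representation theory; this is precisely the content of \cite{fl1,chz1,chz2}. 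For a general H-type group the $J^{2}$-condition fails, there is no rank-one symmetric space behind $G$, no CR-type sphere, and no known eigenfunction expansion of $\mathcal{I}$ -- the sharp HLS inequality in this generality is open. So your argument defers the entire difficulty to a step you cannot carry out, and ``checking the sign of $\lambda_{0}-\lambda_{1}$ by hand'' is not a proof. (The duality via the fundamental solution of $\Delta_{G}$ is fine; it is the sharp dual inequality itself that is unavailable.)

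The paper avoids the compactification altogether and works directly on $G$ with a weighted subcritical approximation: one minimizes $\int_{G}|\nabla_{G}u|^{2}$ subject to $\int_{G}|u|^{p}U^{\frac{2Q}{Q-2}-p}=1$ for $p<\frac{2Q}{Q-2}$, where the weight $U^{\frac{2Q}{Q-2}-p}$ restores compactness of the embedding (Lemmas \ref{lm3.1}, \ref{lm3.2}), so a positive minimizer $u_{p}$ exists. Invariance under the finite-dimensional family $\{\delta_{\lambda},\ \text{left translations}\}$ gives the moment-zero conditions of Lemma \ref{lm3.3}; inserting $f=u_{p}\omega_{i}$ into the second variation and summing over $i$ uses the two identities that replace your spectral analysis: the pointwise identity $\sum_{i}\omega_{i}^{2}=1$ of Lemma \ref{lm2.2} -- which is where the H-type structure (orthogonality of $J_{t}$ and skew-symmetry of the $U^{(r)}$) actually enters -- and the summed Dirichlet-energy identity of Lemma \ref{lm3.4}, which rests on the linearized equation \eqref{2.5}. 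This forces the Hardy deficit of $u_{p}$ to vanish as $p\nearrow\frac{2Q}{Q-2}$, whence $u_{p}\to c_{0}U$ (Lemma \ref{lm3.5}) and $U$ is extremal; the constant $S_{m,n}$ is then read off from the explicit integral computed in \cite{gv1}. Your proposal never identifies the identity $\sum_{i}\omega_{i}^{2}=1$, which is the elementary but essential substitute for the missing representation theory; without it, or without the unproven spectral gap, the argument does not close.
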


By Theorem \ref{th1.2}, it is easy to see that the functions $cU_{\lambda,\eta}(\xi)(c\in\mathbb{R})$ are also  extremal functions of inequality (\ref{1.4}).

As an application of Theorem \ref{th1.2}, we
 study  the
eigenvalues of
\begin{eqnarray}\label{1.8}
-\Delta_{G}v=\mu U_{\lambda,\eta}^{\frac{4}{Q-2}}v,\;\;v\in
W^{1,2}_{0}(G).
\end{eqnarray}
We note that  the eigenvalues
of (\ref{1.8}) play  an important role in the study of   stability  for the  Folland-Stein inequality
(see  \cite{be,bl,ch,do,lu} for the case of Euclidean space and \cite{lo} for the case of Heisenberg group).
In Lemma \ref{lm3.2} we show that the  embedding map $W^{1,2}_{0}(G)\hookrightarrow L^{2}(G,
U(x,t)^{\frac{4}{Q-2}}dxdt)$ is compact. So the spectrum of (\ref{1.8}) is discrete (we note that
the spectrum  do not depend on $\lambda$ and
$\eta$, see Theorem \ref{th1.3}). Furthermore, we have  the following theorem:
\begin{theorem}\label{th1.3}
 Let $\mu_{i},\;i=1,2,\cdots$ be the eigenvalues
of (\ref{1.8}) given in increasing order. Then

(1) $\mu_{1}=\frac{m(Q-2)}{4}$ is simple with eigenfunction
$U_{\lambda,\eta}$.

(2) $\mu_{2}=\frac{m(Q+2)}{4}$ and
$\{\partial_{\lambda}U_{\lambda,\eta},\;\nabla_{\eta}U_{\lambda,\eta}\}$ are eigenfunctions.

Furthermore, the eigenvalues do not depend on $\lambda$ and
$\eta$.
\end{theorem}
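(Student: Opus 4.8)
\emph{Reduction and set-up.}
The plan is to fix the model bubble $U=U_{1,0}$ and transfer everything by the symmetries of the problem. For $\lambda>0$ and $\eta\in G$ the map $(T_{\lambda,\eta}v)(\xi)=\lambda^{(Q-2)/2}v\big(\delta_\lambda(\eta^{-1}\circ\xi)\big)$ is simultaneously an isometry of the Dirichlet norm on $W^{1,2}_0(G)$ and an isometry of $L^2\big(G,U^{4/(Q-2)}dxdt\big)$ onto $L^2\big(G,U_{\lambda,\eta}^{4/(Q-2)}dxdt\big)$; this uses only the left-invariance of $\nabla_G$, the homogeneity $\Delta_G(f\circ\delta_\lambda)=\lambda^2(\Delta_Gf)\circ\delta_\lambda$, and the identity $U_{\lambda,\eta}=T_{\lambda,\eta}U$ from \eqref{2.3}, and it maps solutions of $-\Delta_Gv=\mu U^{4/(Q-2)}v$ bijectively to solutions of $-\Delta_Gv=\mu U_{\lambda,\eta}^{4/(Q-2)}v$. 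Hence the spectrum of \eqref{1.8} is independent of $(\lambda,\eta)$, and one may take $U_{\lambda,\eta}=U$. By Lemma \ref{lm3.2} the operator $v\mapsto(-\Delta_G)^{-1}\!\big(U^{4/(Q-2)}v\big)$ is compact, self-adjoint and positive on $L^2\big(G,U^{4/(Q-2)}dxdt\big)$, so \eqref{1.8} has discrete spectrum with a complete orthonormal system of eigenfunctions, and the $\mu_i$ obey the Courant--Fischer min--max principle.

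\emph{Part (1).}
Equation \eqref{2.4} at $\lambda=1,\eta=0$ reads $-\Delta_GU=\tfrac{m(Q-2)}{4}U^{4/(Q-2)}U$, so $U$ is an eigenfunction with eigenvalue $\tfrac{m(Q-2)}{4}$. Since $U>0$ everywhere and the fundamental solution of $-\Delta_G$ is strictly positive, a first eigenfunction does not change sign, hence is positive by the strong maximum principle, and two positive eigenfunctions are proportional; this forces $\mu_1=\tfrac{m(Q-2)}{4}$, simple, with eigenfunction $U$ (equivalently $U_{\lambda,\eta}$ in general). In particular the next eigenvalue is
\begin{align*}
\mu_2=\inf\bigg\{\,\frac{\int_G|\nabla_Gv|^2\,dxdt}{\int_GU^{4/(Q-2)}v^2\,dxdt}\ :\ v\in W^{1,2}_0(G)\setminus\{0\},\ \int_GU^{\frac{Q+2}{Q-2}}v\,dxdt=0\,\bigg\},
\end{align*}
the constraint being exactly $L^2\big(G,U^{4/(Q-2)}dxdt\big)$-orthogonality to $U$.

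\emph{Part (2), upper bound.}
Differentiating the identity $\Delta_GU_{\lambda,\eta}+\tfrac{m(Q-2)}{4}U_{\lambda,\eta}^{(Q+2)/(Q-2)}=0$ in $\lambda$ and in the components of $\eta$ gives
\begin{align*}
-\Delta_G\big(\partial_\lambda U_{\lambda,\eta}\big)=\tfrac{m(Q+2)}{4}\,U_{\lambda,\eta}^{4/(Q-2)}\,\partial_\lambda U_{\lambda,\eta},\\
-\Delta_G\big(\nabla_\eta U_{\lambda,\eta}\big)=\tfrac{m(Q+2)}{4}\,U_{\lambda,\eta}^{4/(Q-2)}\,\nabla_\eta U_{\lambda,\eta}.
\end{align*}
Using the explicit decay of $U$ one checks $\partial_\lambda U_{\lambda,\eta},\nabla_\eta U_{\lambda,\eta}\in W^{1,2}_0(G)$, so these are genuine eigenfunctions with eigenvalue $\tfrac{m(Q+2)}{4}$; since $\tfrac{m(Q+2)}{4}\ne\mu_1$ they are orthogonal to $U$ in $L^2\big(G,U^{4/(Q-2)}dxdt\big)$, whence $\mu_2\le\tfrac{m(Q+2)}{4}$.

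\emph{Part (2), lower bound --- the crux --- and the main obstacle.}
For the reverse inequality I would invoke Theorem \ref{th1.2}: $U$ minimizes $J(u)=\int_G|\nabla_Gu|^2\,dxdt\big/\big(\int_G|u|^{2Q/(Q-2)}\,dxdt\big)^{(Q-2)/Q}$ on $W^{1,2}_0(G)\setminus\{0\}$. Fixing $v\in C_0^\infty(G)$, the map $\epsilon\mapsto J(U+\epsilon v)$ is twice differentiable near $0$, attains a global minimum at $\epsilon=0$, and has vanishing first derivative (the Euler--Lagrange equation); expanding to second order and using $-\Delta_GU=\tfrac{m(Q-2)}{4}U^{4/(Q-2)}U$ to simplify the cross terms, with $b_0:=\int_GU^{2Q/(Q-2)}\,dxdt$, the inequality $\frac{d^2}{d\epsilon^2}\big|_{\epsilon=0}J(U+\epsilon v)\ge0$ reduces to
\begin{align*}
\int_G|\nabla_Gv|^2\,dxdt\ \ge\ \tfrac{m(Q+2)}{4}\int_GU^{4/(Q-2)}v^2\,dxdt\ -\ \frac{m}{b_0}\Big(\int_GU^{\frac{Q+2}{Q-2}}v\,dxdt\Big)^2 .
\end{align*}
By density this extends to all $v\in W^{1,2}_0(G)$; imposing $\int_GU^{(Q+2)/(Q-2)}v\,dxdt=0$ yields $\int_G|\nabla_Gv|^2\,dxdt\ge\tfrac{m(Q+2)}{4}\int_GU^{4/(Q-2)}v^2\,dxdt$, i.e.\ $\mu_2\ge\tfrac{m(Q+2)}{4}$ by the variational formula of Part (1); together with the upper bound this gives $\mu_2=\tfrac{m(Q+2)}{4}$ and finishes the proof. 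The main obstacle is precisely this last step: one must legitimately differentiate $\int_G|U+\epsilon v|^{2Q/(Q-2)}\,dxdt$ twice in $\epsilon$, pass from $C_0^\infty(G)$ to $W^{1,2}_0(G)$ (which needs $U\in W^{1,2}_0(G)\cap L^{2Q/(Q-2)}(G)$, implicit in Theorem \ref{th1.2}, together with the continuity of the functionals involved), carry out the otherwise routine second-order expansion of the quotient, and establish the decay estimates that place $\partial_\lambda U_{\lambda,\eta}$ and $\nabla_\eta U_{\lambda,\eta}$ in $W^{1,2}_0(G)$; conceptually, however, the whole theorem is just the minimality of $U$ (Theorem \ref{th1.2}) fed into the differentiated equation \eqref{2.4}.
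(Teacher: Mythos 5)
Your proposal is correct and follows essentially the same route as the paper: the scaling reduction to $(\lambda,\eta)=(1,0)$, the differentiated Yamabe equation \eqref{2.5} for the upper bound $\mu_2\le\frac{m(Q+2)}{4}$, and the nonnegativity of the second variation of the Sobolev quotient at the extremal $U$ from Theorem \ref{th1.2} for the lower bound (your intermediate inequality with the term $-\frac{m}{b_0}\big(\int_G U^{\frac{Q+2}{Q-2}}v\,dxdt\big)^2$ has the correct coefficient and reduces to the paper's \eqref{3.16} under the orthogonality constraint). The only cosmetic difference is in Part (1), where the paper gets simplicity of $\mu_1$ directly from the equality case of the Hardy inequality (Lemma \ref{lm2.1}) rather than from a maximum-principle argument.
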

\begin{remark}
It seems that
 $\mu_{2}$ has multiplicity
$m+n+1$ with corresponding eigenspace spanned by
$\{\partial_{\lambda}U_{\lambda,\eta},\;\nabla_{\eta}U_{\lambda,\eta}\}$.  However, we fail to prove it.
Once it has been  proven, it would provide a generalization of the results of Bianchi and  Egnell (\cite{be}, Lemma A1)
to the setting of H-type groups.
\end{remark}

\section{preliminaries on H-type groups}

In the rest of paper, we let $G$ be a H-type group with group law given by (\ref{1.3}).
The nonisotropic dilations $\delta_{\lambda}$ on $G$ is
\begin{align*}
 \delta_{\lambda}(x,t)=(\lambda x,\lambda^{2}t).
\end{align*}
For $(x,t)\in G$, the  homogeneous norm of $(x,t)$ is
\begin{align*}
\rho(x,t)=\left(\frac{|x|^{4}}{16}+|t|^{2}\right)^{\frac{1}{4}}.
\end{align*}
With this norm $\rho$, we can define the ball centered at origin with radius $R$
\begin{align*}
 B_{R}(0)=\{(x,t)\in G: \rho(x,t)<R\}
\end{align*}
and the unit sphere $\Sigma=\partial
B_{1}(0)=\{(x,t)\in G: \rho(x,t)=1\}$.

 Given any
$(x,t)\in G$ with $\rho(x,t)\neq0$, we set $x^*=\frac{x}{\rho(x,t)}$
and $t^*=\frac{t}{\rho(x,t)^2}$.
 The polar coordinates on
$G$  associated with $\rho$ are the following (see
\cite{fs3}):
$$\int_{G}f(x,t)dxdt=
\int^{\infty}_{0}\int_{\Sigma}f(
\rho x^{\ast},\rho^{2}t^{\ast})\rho^{Q-1}d\sigma d\rho,\;\;f\in
L^{1}(G).$$

The following theorem was proved in \cite{bu}, Theorem A.2:
\begin{theorem}
\label{propmain1} $G$ is a H-type group if and only if $G$ is
(isomorphic to) $\mathbb{R}^{m+n}$ with the group law in (\ref{1.3}) and
the matrices $U^{(1)}, U^{(2)}, \cdots, U^{(n)}$ have the following
properties:

(1) $U^{(j)}$ is a $m\times m$ skew symmetric and orthogonal matrix,
for every $j=1,2,\cdots, n$;

(2) $U^{(i)} U^{(j)}+U^{(j)}U^{(i)}=0$ for every $i,j\in
\{1,2,\cdots, n\}$ with $i\neq j$.
\end{theorem}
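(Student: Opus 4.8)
The plan is to pass to exponential coordinates of the first kind and convert the coordinate-free definition of an H-type group into linear algebra on the matrices $U^{(j)}$; once that dictionary is set up, both implications fall out of essentially the same computation.

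First I would fix coordinates. Choose orthonormal bases $e_1,\dots,e_m$ of $\mathfrak z^{\perp}$ and $\epsilon_1,\dots,\epsilon_n$ of $\mathfrak z$ and identify $\mathfrak g\cong\R^{m}\times\R^{n}$ accordingly. Since $G$ is simply connected and nilpotent, $\exp\colon\mathfrak g\to G$ is a global diffeomorphism and supplies coordinates $(x,t)$ on $G$. Because $G$ has step two, the Baker--Campbell--Hausdorff formula collapses to $\exp X\exp Y=\exp\!\big(X+Y+\tfrac12[X,Y]\big)$; writing $[e_i,e_k]=\sum_{j}U^{(j)}_{ik}\epsilon_j$ and using that $\mathfrak z$ is central, this produces precisely the group law (1.3), and skew-symmetry of each $U^{(j)}$ is immediate from $[e_i,e_k]=-[e_k,e_i]$. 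Next I would record the matrix of $J_z$: for $z=\sum_{l}z_l\epsilon_l$, the definition (1.2) together with orthonormality gives $\langle J_z e_i,e_k\rangle=\sum_j z_j U^{(j)}_{ik}$, so with skew-symmetry the matrix of $J_z$ in the basis $\{e_i\}$ is $-\sum_j z_j U^{(j)}$. Since $J_z$ is automatically skew-symmetric (directly from (1.2)) and linear in $z$, the requirement that $J_z$ be orthogonal whenever $|z|=1$ is equivalent to $J_z^{2}=-|z|^{2}I$ for every $z\in\mathfrak z$.

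For the forward implication I would specialize this identity. Taking $z=\epsilon_j$ gives $(U^{(j)})^{2}=-I$, which combined with skew-symmetry reads $(U^{(j)})^{T}U^{(j)}=I$, i.e. property (1). Taking $z=\epsilon_i+\epsilon_j$ with $i\neq j$, so that $|z|^{2}=2$, and expanding $(U^{(i)}+U^{(j)})^{2}=-2I$ using the relations just obtained, one is left with $U^{(i)}U^{(j)}+U^{(j)}U^{(i)}=0$, i.e. property (2). For the converse, starting from $\R^{m+n}$ with law (1.3) and matrices obeying (1) and (2), a short computation of the left-invariant vector fields shows $\mathfrak g=\mathrm{span}\{X_i\}\oplus\mathrm{span}\{\partial_{t_j}\}$ with the $\partial_{t_j}$ central and $[X_i,X_k]=\sum_j U^{(j)}_{ik}\partial_{t_j}$. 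From (1) and (2) one computes, for any $z=\sum_j z_j\epsilon_j$, that $\big(\sum_j z_j U^{(j)}\big)^{2}=-|z|^{2}I$ (the cross terms cancel by (2), the diagonal terms equal $-I$ by (1)); hence $\sum_j z_j U^{(j)}\neq0$ for $z\neq0$, and also $\sum_j\|U^{(j)}x\|^{2}=n|x|^{2}$ by (1). The first fact forces $[\mathfrak z^{\perp},\mathfrak z^{\perp}]=\mathfrak z$, and together with centrality of the $\partial_{t_j}$ and the second fact it gives that $\mathfrak z$ is exactly the center; and the displayed identity is just $J_z^{2}=-|z|^{2}I$, so $G$ is of H-type.

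There is no substantive obstacle here: the whole difficulty is bookkeeping — keeping straight the factor $\tfrac12$ in the BCH formula, which index of $U^{(j)}$ labels rows versus columns, and the sign relating the matrix of $J_z$ to $\sum_j z_j U^{(j)}$ — together with the two nondegeneracy checks in the converse direction (that $\mathfrak z$ genuinely equals the center and that the brackets span all of $\mathfrak z$), both of which drop out at once from orthogonality of the $U^{(j)}$.
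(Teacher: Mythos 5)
Your argument is correct. Note that the paper does not prove this statement itself --- it is quoted from Bonfiglioli--Uguzzoni \cite{bu}, Theorem A.2 --- and your route (exponential coordinates of the first kind, the step-two Baker--Campbell--Hausdorff formula to recover the group law (\ref{1.3}), the identification of the matrix of $J_z$ with $-\sum_j z_j U^{(j)}$, and the equivalence of orthogonality of the skew-symmetric $J_z$ with $J_z^2=-|z|^2 I$, polarized to get (1) and (2)) is essentially the standard argument given in that reference, including the two nondegeneracy checks in the converse direction.
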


The vector field in the Lie algebra
$\mathfrak{g}$  that agrees at the
origin with $\frac{\partial}{\partial x_{j}}(j=1,\cdots,m)$ is given
by
\begin{equation*}
\begin{split}
X_{j} = \frac{\partial}{\partial
x_{j}}+\frac{1}{2}\sum^{n}_{k=1}\left(
\sum_{i=1}^{m}U^{(k)}_{i,j}x_{i}\right)\frac{\partial}{\partial
t_{k}}
\end{split}
\end{equation*}
and $\mathfrak{g}$ is spanned by the left-invariant vector
fields $X_{1},\cdots,X_{m},T_{1}=\frac{\partial}{\partial
t_{1}},\cdots,T_{n}=\frac{\partial}{\partial t_{n}}$.
Furthermore (see \cite{bu}, Page 200, (A.4) ),
\begin{align}\label{bb2.1}
[X_{i},X_{j}]=\sum^{n}_{r=1}U^{(r)}_{i,j}T_{r},\; i,j\in
\{1,2,\cdots, n\}.
\end{align}
The exponential map $\exp: \mathfrak{g}\rightarrow G$ is
\begin{align*}
\exp: \mathfrak{g}\rightarrow \mathbb{R}^{m+n},\;\; \sum_{i=1}^{m}x_{i}X_{i}+ \sum_{j=1}^{n}t_{j}T_{j}\mapsto (x,t).
\end{align*}
We note that by  exponential mapping,   the group law (\ref{1.3}) is nothing  but the
Baker-Campbell-Hausdorff formula (see \cite{bu}, the proof of Theorem A.2)
\begin{align*}
\exp X \circ \exp Y=\exp(X+Y+\frac{1}{2}[X,Y]), \; X, Y\in \mathfrak{g}.
\end{align*}
Using (\ref{bb2.1}), we have that for $t=(t_{1},\cdots,t_{n})=t_{1}T_{1}+\cdots+
t_{n}T_{n}$ and $x=(x_{1},\cdots,x_{m})=x_{1}X_{1}+\cdots+
x_{m}X_{m}$, the map $J_{t}$, defined by (\ref{1.2}),  is (see also \cite{bu}, Page 201)
\begin{equation*}
  \begin{split}
 J_{t}x=&\sum^{n}_{r=1}\sum^{m}_{i=1}t_{r}x_{i}J_{T_{r}}(X_{i})=\sum^{n}_{r=1}\sum^{m}_{i=1}t_{r}x_{i}
\left(\sum^{m}_{j=1}U^{(r)}_{i,j}X_{j}\right)\\
=&\sum^{m}_{j=1}\left(\sum^{n}_{r=1}\sum^{m}_{i=1}t_{r}x_{i}U^{(r)}_{i,j}\right)X_{j}.
  \end{split}
\end{equation*}
Since $J_{t}$ is an orthogonal map whenever $|t|=1$, we obtain
\begin{align}\label{2.1}
|J_{t}x|^{2}=|t|^{2}|x|^{2}=\sum^{m}_{j=1}\left(\sum^{n}_{r=1}\sum^{m}_{i=1}t_{r}x_{i}U^{(r)}_{i,j}\right)^{2}.
\end{align}

The horizontal gradient on $G$ is $\nabla_{G}=(X_{1},\cdots,X_{m})$.
The
sub-Laplacian on $G$ is given by (see \cite{bu}, Remark A.6.)
\begin{equation*}
\begin{split}
\Delta_{G}=\sum^{m}_{j=1}X^{2}_{j}&=\sum^{m}_{j=1}\left(\frac{\partial}{\partial
x_{j}}+\frac{1}{2}\sum^{n}_{k=1}\left(
\sum_{i=1}^{m}U^{(k)}_{i,j}x_{i}\right)\frac{\partial}{\partial
t_{k}}\right)^{2}\\
&=\Delta_{x}+\frac{1}{4}|x|^{2}\Delta_{t}+\sum^{n}_{k=1}\langle x,U^{(k)}\nabla_{x}\rangle\frac{\partial}{\partial
t_{k}},
\end{split}
\end{equation*}
where
$$\Delta_{x}=\sum^{m}_{j=1}\left(\frac{\partial}{\partial
x_{j}}\right)^{2},\;\;\Delta_{t}=\sum^{n}_{k=1}\left(\frac{\partial}{\partial
t_{k}}\right)^{2}.$$
We remark that $\Delta_{G}$ is homogeneous of degree two with respect to $\delta_{\lambda}$.

By using (\ref{2.4}), we have the following Hardy inequality (see \cite{rt}, Corollary 1.4  for Hardy inequality of fractional powers of the sublaplacian on $G$).
\begin{lemma}\label{lm2.1}
It holds that, for $u\in W^{1,2}_{0}(G)$,
\begin{align*}
 \int_{G}|\nabla_{G}u|^{2}dxdt\geq \frac{m(Q-2)}{4}\int_{G}\frac{u^{2}}{(1+\frac{|x|^{2}}{4})^{2}+|t|^{2}}dxdt,
\end{align*}
with equality if and only if $u=cU(x,t)$, where  $c\in \mathbb{R}$ and $U(x,t)$ is given by (\ref{2.2}).
\end{lemma}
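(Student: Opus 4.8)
The plan is to prove Lemma \ref{lm2.1} by the \emph{ground state substitution} --- a Picone-type identity built on the explicit solution $U$ of the Yamabe-type equation (\ref{2.4}). The starting observation is that, since $U=\big[(1+\frac{|x|^{2}}{4})^{2}+|t|^{2}\big]^{-\frac{Q-2}{4}}$, one has $U^{\frac{4}{Q-2}}=\big[(1+\frac{|x|^{2}}{4})^{2}+|t|^{2}\big]^{-1}$, so (\ref{2.4}) with $\lambda=1$ and $\eta$ the group identity becomes the \emph{linear} equation
\begin{align*}
-\Delta_{G}U=\frac{m(Q-2)}{4}\cdot\frac{U}{(1+\frac{|x|^{2}}{4})^{2}+|t|^{2}}.
\end{align*}
Because $U$ is smooth and strictly positive on $G$, for $u\in C_{0}^{\infty}(G)$ we have the pointwise identity
\begin{align*}
|\nabla_{G}u|^{2}=\nabla_{G}U\cdot\nabla_{G}\!\Big(\frac{u^{2}}{U}\Big)+U^{2}\Big|\nabla_{G}\!\Big(\frac{u}{U}\Big)\Big|^{2}.
\end{align*}
Integrating over $G$, integrating by parts in the first term (permissible since $u$ has compact support and the horizontal vector fields are divergence-free), and inserting the displayed equation gives
\begin{align*}
\int_{G}|\nabla_{G}u|^{2}dxdt=\frac{m(Q-2)}{4}\int_{G}\frac{u^{2}}{(1+\frac{|x|^{2}}{4})^{2}+|t|^{2}}dxdt+\int_{G}U^{2}\Big|\nabla_{G}\!\Big(\frac{u}{U}\Big)\Big|^{2}dxdt.
\end{align*}
Dropping the last, nonnegative term proves the inequality for $u\in C_{0}^{\infty}(G)$.

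To extend it to all $u\in W_{0}^{1,2}(G)$ I would argue by density. The left-hand side is the squared Sobolev norm, hence continuous; for the right-hand side I would combine the Folland-Stein embedding $W_{0}^{1,2}(G)\hookrightarrow L^{\frac{2Q}{Q-2}}(G)$ with the fact that the weight $\big[(1+\frac{|x|^{2}}{4})^{2}+|t|^{2}\big]^{-1}$ lies in $L^{Q/2}(G)$ --- it is bounded near the origin and comparable to $\rho^{-4}$ at infinity, so in polar coordinates $\int_{\rho>1}\rho^{-2Q}\rho^{Q-1}d\rho<\infty$ because $Q>2$ --- together with H\"older's inequality; these make $u\mapsto\int_{G}u^{2}\big[(1+\frac{|x|^{2}}{4})^{2}+|t|^{2}\big]^{-1}dxdt$ continuous on $W_{0}^{1,2}(G)$. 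Passing to the limit yields the Hardy inequality in general.

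For the equality statement, suppose $u\in W_{0}^{1,2}(G)$ achieves equality and pick $u_{k}\in C_{0}^{\infty}(G)$ with $u_{k}\to u$ in $W_{0}^{1,2}(G)$; applying the identity above to each $u_{k}$ and using the continuity established above, the right-hand side tends to the deficit of $u$, which is zero, so $\int_{G}U^{2}|\nabla_{G}(u_{k}/U)|^{2}dxdt\to0$. On each ball $B_{R}(0)$ the function $U$ is bounded below by a positive constant, hence $\nabla_{G}(u_{k}/U)\to0$ and $u_{k}/U\to u/U$ in $L^{2}(B_{R}(0))$, so $\nabla_{G}(u/U)=0$ distributionally on $B_{R}(0)$, and, $R$ being arbitrary, on all of $G$. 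Since $G$ is connected and $\nabla_{G}=(X_{1},\dots,X_{m})$ satisfies H\"ormander's condition, $u/U$ is (smooth and) constant, i.e.\ $u=cU$ for some $c\in\mathbb{R}$. Conversely, $U\in W_{0}^{1,2}(G)$ (its horizontal gradient is square-integrable on $G$ and $U$ may be cut off, using $Q>2$), and testing the linear equation against $U$ shows that every $cU$ realises equality. The genuinely delicate points all lie in this last paragraph: the local convergence argument for $u_{k}/U$, the passage from a vanishing horizontal gradient to constancy via H\"ormander's condition, and the verification that $U$ itself belongs to $W_{0}^{1,2}(G)$; the remaining steps are routine.
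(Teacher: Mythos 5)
Your proof is correct and follows essentially the same route as the paper: the ground state substitution $u=Uv$ combined with the equation (\ref{2.4}) satisfied by $U$, i.e.\ the identity $\int_{G}U^{2}|\nabla_{G}(U^{-1}u)|^{2}dxdt=\int_{G}|\nabla_{G}u|^{2}dxdt-\frac{m(Q-2)}{4}\int_{G}\frac{u^{2}}{(1+\frac{|x|^{2}}{4})^{2}+|t|^{2}}dxdt$. The only difference is that you spell out the density argument and the equality case, which the paper leaves implicit.
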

\begin{proof}
We have, through integration by parts,
\begin{align}
0\leq&\int_{G}U^{2}|\nabla_{G}(U(x,t)^{-1}u)|^{2}dxdt\nonumber\\
=&\int_{G}\left|\nabla_{G}u-\frac{u}{U}\nabla_{G}U\right|^{2}dxdt\nonumber\\
=&\int_{G}|\nabla_{G}u|^{2}dxdt+\int_{G}\frac{|\nabla_{G} U|^{2}}{U^{2}}u^{2}dxdt-
\int_{G}\frac{1}{U}\langle \nabla_{G}u^{2},\nabla_{G}U\rangle dxdt\nonumber\\
=&\int_{G}|\nabla_{G}u|^{2}dxdt+
\int_{G}u^{2}\frac{1}{U}\Delta_{G}U dxdt\nonumber\\
=&\int_{G}|\nabla_{G}u|^{2}dxdt- \frac{m(Q-2)}{4}\int_{G}\frac{u^{2}}{(1+\frac{|x|^{2}}{4})^{2}+|t|^{2}}dxdt. \label{b2.1}
\end{align}
To get the last equality, we use (\ref{2.4}). The desired result follows.
\end{proof}

Set $\eta=(y_{1},\cdots,y_{m},w_{1},\cdots,w_{n})\in G$.
By (\ref{2.4}), we have
\begin{equation}\label{2.5}
  \begin{split}
  &\Delta_{G}\frac{\partial U_{\lambda,\eta}}{\partial
  y_{j}}+\frac{m(Q+2)}{4}U_{\lambda,\eta}^{\frac{4}{Q-2}}\frac{\partial U_{\lambda,\eta}}{\partial
  y_{j}}=0,\;\;\;\;j=1,,\cdots,m,\\
&\Delta_{G}\frac{\partial U_{\lambda,\eta}}{\partial
  w_{r}}+\frac{m(Q+2)}{4}U_{\lambda,\eta}^{\frac{4}{Q-2}}\frac{\partial U_{\lambda,\eta}}{\partial
  w_{r}}=0,\;\;\;\;r=1,,\cdots,n,\\
&\Delta_{G}\frac{\partial U_{\lambda,\eta}}{\partial
  \lambda}+\frac{m(Q+2)}{4}U_{\lambda,\eta}^{\frac{4}{Q-2}}\frac{\partial U_{\lambda,\eta}}{\partial
  \lambda}=0.
  \end{split}
\end{equation}
Furthermore, we have the following lemma:
\begin{lemma}\label{lm2.2}
It holds that
\begin{align*}
\sum_{j=1}^{m}\left|\frac{\partial U_{\lambda,\eta}}{\partial
  y_{j}}|_{\lambda=1,\eta=0}\right|^{2}+\sum_{r=1}^{n}\left|\frac{\partial U_{\lambda,\eta}}{\partial
  w_{r}}|_{\lambda=1,\eta=0}\right|^{2}+\frac{1}{4}\left|\frac{\partial U_{\lambda,\eta}}{\partial
 \lambda}|_{\lambda=1,\eta=0}\right|^{2}=\frac{(Q-2)^{2}}{16} U(\xi)^{2}.
\end{align*}
\end{lemma}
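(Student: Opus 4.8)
The plan is a direct computation: write $U_{\lambda,\eta}$ in coordinates, differentiate at $\lambda=1,\eta=0$, square and sum, and reduce everything to an elementary scalar identity. First I would make the composition explicit. From the group law (\ref{1.3}) one has $\eta^{-1}=(-y,-w)$ and
$$\eta^{-1}\circ\xi=\Big(x-y,\ t_k-w_k-\tfrac12\langle y,U^{(k)}x\rangle\Big)_{k=1,\dots,n},$$
so that, applying $\delta_{\lambda}(x,t)=(\lambda x,\lambda^{2}t)$ and using (\ref{2.2})--(\ref{2.3}),
$$U_{\lambda,\eta}(\xi)=\lambda^{\frac{Q-2}{2}}\big[G_{\lambda,\eta}(\xi)\big]^{-\frac{Q-2}{4}},\qquad G_{\lambda,\eta}(\xi):=\Big(1+\tfrac{\lambda^{2}|x-y|^{2}}{4}\Big)^{2}+\lambda^{4}\sum_{k=1}^{n}\big(t_k-w_k-\tfrac12\langle y,U^{(k)}x\rangle\big)^{2}.$$
Since $G_{1,0}=F:=\big(1+\tfrac{|x|^{2}}{4}\big)^{2}+|t|^{2}$ and $U=F^{-\frac{Q-2}{4}}$, differentiating and setting $\lambda=1,\eta=0$ (write $a:=1+\tfrac{|x|^{2}}{4}$, so $|x|^{2}=4(a-1)$ and $\rho^{4}=(a-1)^{2}+|t|^{2}$) gives
$$\partial_{y_j}U_{\lambda,\eta}\big|_{1,0}=\tfrac{Q-2}{4}F^{-\frac{Q+2}{4}}\Big(a\,x_j+\sum_{k=1}^{n}t_k\,(U^{(k)}x)_j\Big),\qquad \partial_{w_r}U_{\lambda,\eta}\big|_{1,0}=\tfrac{Q-2}{2}F^{-\frac{Q+2}{4}}\,t_r,$$
and, using $\partial_{\lambda}G_{\lambda,\eta}|_{1,0}=a|x|^{2}+4|t|^{2}$ together with the elementary identity $2F-a|x|^{2}-4|t|^{2}=2(1-\rho^{4})$,
$$\partial_{\lambda}U_{\lambda,\eta}\big|_{1,0}=\tfrac{Q-2}{2}F^{-\frac{Q+2}{4}}\big(1-\rho^{4}\big).$$

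Next I would square and sum. For the $y_j$-derivatives the cross terms vanish, $\sum_{j}x_j(U^{(k)}x)_j=\langle x,U^{(k)}x\rangle=0$ because $U^{(k)}$ is skew-symmetric, while the quadratic term collapses via (\ref{2.1}): $\sum_{j}\big(\sum_{k}t_k(U^{(k)}x)_j\big)^{2}=|J_tx|^{2}=|t|^{2}|x|^{2}$. Hence
$$\sum_{j=1}^{m}\big(\partial_{y_j}U_{\lambda,\eta}|_{1,0}\big)^{2}=\tfrac{(Q-2)^{2}}{16}F^{-\frac{Q+2}{2}}\big(a^{2}|x|^{2}+|t|^{2}|x|^{2}\big)=\tfrac{(Q-2)^{2}}{16}F^{-\frac{Q+2}{2}}\,|x|^{2}F,$$
and likewise $\sum_{r=1}^{n}\big(\partial_{w_r}U|_{1,0}\big)^{2}=\tfrac{(Q-2)^{2}}{16}F^{-\frac{Q+2}{2}}\cdot 4|t|^{2}$ and $\tfrac14\big(\partial_{\lambda}U|_{1,0}\big)^{2}=\tfrac{(Q-2)^{2}}{16}F^{-\frac{Q+2}{2}}(1-\rho^{4})^{2}$. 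Factoring out $\tfrac{(Q-2)^{2}}{16}F^{-\frac{Q+2}{2}}$ and recalling $F^{-\frac{Q+2}{2}}F^{2}=F^{-\frac{Q-2}{2}}=U^{2}$, the assertion becomes equivalent to the scalar identity
$$|x|^{2}F+4|t|^{2}+(1-\rho^{4})^{2}=F^{2},$$
which I would finish by inserting $|x|^{2}=4(a-1)$, $\rho^{4}=(a-1)^{2}+|t|^{2}$, $F=a^{2}+|t|^{2}$ and checking that, as polynomials in $a$ and $|t|^{2}$, the coefficients of $|t|^{4}$, of $|t|^{2}$ and the $|t|$-free part on both sides all agree (each equals the corresponding coefficient of $(a^{2}+|t|^{2})^{2}$).

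I expect the only delicate point to be the $\eta$-differentiation: the skew-symmetric cross term $-\tfrac12\langle y,U^{(k)}x\rangle$ sitting in the $t$-slot of $\eta^{-1}\circ\xi$ must be retained when computing $\partial_{y_j}$ — it is exactly what produces the term $\sum_{k}t_k(U^{(k)}x)_j$ — and one then needs precisely the orthogonality relation (\ref{2.1}) (equivalently the H-type condition (\ref{1.2})) to turn $\sum_j\big(\sum_k t_k(U^{(k)}x)_j\big)^2$ into $|t|^{2}|x|^{2}$. Once that is in place, the remainder is bookkeeping plus the one-line polynomial identity above.
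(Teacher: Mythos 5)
Your proposal is correct and follows essentially the same route as the paper: write out $U_{\lambda,\eta}$ explicitly from the group law, differentiate at $\lambda=1,\eta=0$, kill the cross term by skew-symmetry of $U^{(k)}$, collapse $\sum_j\bigl(\sum_k t_k(U^{(k)}x)_j\bigr)^2$ to $|t|^2|x|^2$ via the H-type identity (2.1), and finish with the polynomial identity $|x|^2F+4|t|^2+(1-\rho^4)^2=F^2$. All intermediate formulas match the paper's computation.
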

\begin{proof}
It is easy to see $\eta^{-1}=-\eta$. Therefore, by (\ref{1.3}) and (\ref{2.3}), we have
\begin{align*}
U_{\lambda,\eta}(x,t)=&\lambda^{\frac{Q-2}{2}}\left[\left(1+\frac{\lambda^{2}}{4}\sum^{m}_{i=1}(x_{i}-y_{i})^{2}\right)^{2}+
\lambda^{4}\sum^{n}_{r=1}\left(t_{r}-w_{r}-\frac{\langle y,U^{(r)}x\rangle}{2}\right)^{2}\right]^{-\frac{Q-2}{4}}.
\end{align*}
We compute
\begin{align*}
\frac{\partial U_{\lambda,\eta}}{\partial
  y_{j}}|_{\lambda=1,\eta=0}=&-\frac{Q-2}{4}U(\xi)^{\frac{Q+2}{Q-2}}
  \left[2\left(1+\frac{|x|^{2}}{4}\right)\cdot\left(-\frac{x_{j}}{2}\right)+\sum^{n}_{r=1}t_{r}\left(-
  \sum^{m}_{i=1}U_{j,i}^{(r)}x_{i}\right)\right]\\
  =&\frac{Q-2}{4} U(\xi)^{\frac{Q+2}{Q-2}}\left[\left(1+\frac{|x|^{2}}{4}\right)x_{j}+\sum^{n}_{r=1}
  \sum^{m}_{i=1}t_{r}x_{i}U_{j,i}^{(r)}\right],\;\;j=1,\cdots,m;\\
  \frac{\partial U_{\lambda,\eta}}{\partial
  w_{r}}|_{\lambda=1,\eta=0}=&-\frac{Q-2}{4} U(\xi)^{\frac{Q+2}{Q-2}}(-2t_{r})\\
  =&\frac{Q-2}{2}U(\xi)^{\frac{Q+2}{Q-2}}t_{r}, \;\;\;\;\;\;\;\;r=1,\cdots,n;\\
 \frac{\partial U_{\lambda,\eta}}{\partial
  \lambda}|_{\lambda=1,\eta=0}=&\frac{Q-2}{2}U(\xi)-\frac{Q-2}{4}
 U(\xi)^{\frac{Q+2}{Q-2}}\left(2\left(1+\frac{|x|^{2}}{4}\right)\cdot
  \frac{|x|^{2}}{2}+4|t|^{2}\right)\\
  =&-\frac{Q-2}{2}U(\xi)^{\frac{Q+2}{Q-2}}\left(-1+\frac{|x|^{4}}{16}+|t|^{2}\right).
\end{align*}
Since each $U^{(j)}(1\leq j\leq n)$ is a $m\times m$ skew symmetric matrix,  we have, by using (\ref{2.1}),
\begin{align*}
\sum_{j=1}^{m}\left|\frac{\partial U_{\lambda,\eta}}{\partial
  y_{j}}|_{\lambda=1,\eta=0}\right|^{2}=&\frac{(Q-2)^{2}}{16}U(\xi)^{2\frac{Q+2}{Q-2}}\sum_{j=1}^{m}\left[\left(1+\frac{|x|^{2}}{4}\right)x_{j}-\sum^{n}_{r=1}
  \sum^{m}_{i=1}t_{r}x_{i}U_{i,j}^{(r)}\right]^{2}\\
  =&\frac{(Q-2)^{2}}{16} U(\xi)^{2\frac{Q+2}{Q-2}}\left[
  \left(1+\frac{|x|^{2}}{4}\right)^{2}|x|^{2}+|t|^{2}|x|^{2}-\right.\\
  &\left. 2\left(1+\frac{|x|^{2}}{4}\right)\sum^{n}_{r=1}t_{r} \left(\sum_{i=1}^{m}\sum^{m}_{j=1}U_{i,j}^{(r)}x_{i}x_{j}\right)\right]\\
  =&\frac{(Q-2)^{2}}{16} U(\xi)^{2\frac{Q+2}{Q-2}}\left[
  \left(1+\frac{|x|^{2}}{4}\right)^{2}|x|^{2}+|t|^{2}|x|^{2}\right].
\end{align*}
To get the last equality, we use the fact
\begin{align*}
\sum_{i=1}^{m}\sum^{m}_{j=1}U_{i,j}^{(r)}x_{i}x_{j}=0
\end{align*}
since  $U^{(r)}(1\leq r\leq n)$ is a $m\times m$ skew symmetric matrix.
Therefore, we have
\begin{align*}
&\sum_{j=1}^{m}\left|\frac{\partial U_{\lambda,\eta}}{\partial
  y_{j}}|_{\lambda=1,\eta=0}\right|^{2}+\sum_{r=1}^{n}\left|\frac{\partial U_{\lambda,\eta}}{\partial
  w_{r}}|_{\lambda=1,\eta=0}\right|^{2}+\frac{1}{4}\left|\frac{\partial U_{\lambda,\eta}}{\partial
 \lambda}|_{\lambda=1,\eta=0}\right|^{2}\\
 =&\frac{(Q-2)^{2}}{16} U(\xi)^{2\frac{Q+2}{Q-2}}\left[
  \left(1+\frac{|x|^{2}}{4}\right)^{2}|x|^{2}+|t|^{2}|x|^{2}\right]+\frac{(Q-2)^{2}}{4} U(\xi)^{2\frac{Q+2}{Q-2}}|t|^{2}+\\
  &\frac{(Q-2)^{2}}{16} U(\xi)^{2\frac{Q+2}{Q-2}}\left(-1+\frac{|x|^{4}}{16}+|t|^{2}\right)^{2}   \\
 =&\frac{(Q-2)^{2}}{16} U(\xi)^{2\frac{Q+2}{Q-2}}\left[
   \left(1+\frac{|x|^{2}}{4}\right)^{2}|x|^{2}+|t|^{2}|x|^{2}+
   4|t|^{2}+\left(-1+\frac{|x|^{4}}{16}+|t|^{2}\right)^{2}\right]\\
   =&\frac{(Q-2)^{2}}{16} U(\xi)^{2\frac{Q+2}{Q-2}}
   \left[\left(1+\frac{|x|^{2}}{4}\right)^{2}+|t|^{2}\right]^{2}\\
   =&\frac{(Q-2)^{2}}{16} U(\xi)^{2}.
\end{align*}
To get the third  equality, we use the fact
\begin{align*}
 &\left(-1+\frac{|x|^{4}}{16}+|t|^{2}\right)^{2}\\
 =&\left[\left(1+\frac{|x|^{2}}{4}\right)^{2}+|t|^{2}-2\left(1+\frac{|x|^{2}}{4}\right)\right]^{2}\\
 =& \left[\left(1+\frac{|x|^{2}}{4}\right)^{2}+|t|^{2}\right]^{2}+4\left(1+\frac{|x|^{2}}{4}\right)^{2}-
 4\left(1+\frac{|x|^{2}}{4}\right)\left[\left(1+\frac{|x|^{2}}{4}\right)^{2}+|t|^{2}\right]\\
 =&\left[\left(1+\frac{|x|^{2}}{4}\right)^{2}+|t|^{2}\right]^{2}-
  \left(1+\frac{|x|^{2}}{4}\right)^{2}|x|^{2}-|t|^{2}|x|^{2}-
   4|t|^{2}.
\end{align*}
This completes the proof of Lemma \ref{lm2.2}.
\end{proof}

For simplicity, we set
\begin{equation}\label{2.6}
  \begin{split}
 \omega_{j}=&\frac{4}{Q-2}U(\xi)^{-1}\frac{\partial U_{\lambda,\eta}}{\partial
  y_{j}}|_{\lambda=1,\eta=0},\;\;j=1,\cdots,m;\\
\omega_{j+r}=&\frac{4}{Q-2}U(\xi)^{-1}\frac{\partial U_{\lambda,\eta}}{\partial
  w_{r}}|_{\lambda=1,\eta=0},\;\;r=1,\cdots,n;  \\
\omega_{m+n+1}=&\frac{2}{Q-2}U(\xi)^{-1}  \frac{\partial U_{\lambda,\eta}}{\partial
 \lambda}|_{\lambda=1,\eta=0}.
  \end{split}
\end{equation}
By Lemma \ref{lm2.2} and (\ref{2.5}), we have
\begin{align}\label{2.7}
\sum_{j=1}^{m+n+1}\omega_{j}^{2}=&1;\\
\label{2.8}
\Delta_{G}(U(\xi)\omega_{j})+\frac{m(Q+2)}{4}U(\xi)^{\frac{Q+2}{Q-2}}\omega_{j}=&0,\;
1\leq j\leq m+n+1.
\end{align}

\section{Proof of Theorem \ref{th1.2} and \ref{th1.3}}
In this section, we shall prove  Theorem \ref{th1.2} and \ref{th1.3}. The proof  depends on a
scheme of subcritical approximation due to Hang and Wang \cite{hang}.
We first establish the following  subcritical Sobolev inequality on $G$.
\begin{lemma}\label{lm3.1}
Let $2\leq p<\frac{2Q}{Q-2}$. There exists $C>0$ such that for each $u\in W^{1,2}_{0}(G)$,
\begin{align*}
\int_{G}|\nabla_{G}u|^{2}dxdt\geq C\left(\int_{G}|u|^{p}U(x,t)^{\frac{2Q}{Q-2}-p}dxdt\right)^{\frac{2}{p}}.
\end{align*}
\end{lemma}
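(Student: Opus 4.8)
The plan is to derive this weighted subcritical inequality from the Folland–Stein inequality \eqref{1.1} with $p=2$ (which holds with \emph{some} constant $C>0$, not yet the sharp one) together with a H\"older argument that inserts the weight $U^{\frac{2Q}{Q-2}-p}$. First I would note that since $2\le p<\frac{2Q}{Q-2}$, the exponent $s:=\frac{2Q}{Q-2}-p$ satisfies $0<s\le \frac{4}{Q-2}$, so the weight $U^{s}$ is a genuine (positive, locally bounded) weight that decays at infinity. Write $|u|^{p}U^{s}=|u|^{p}\,U^{s}$ and apply H\"older's inequality with the pair of exponents $\big(\tfrac{2Q}{(Q-2)p},\ \tfrac{2Q}{2Q-(Q-2)p}\big)$ (whose reciprocals sum to $1$ precisely because $p<\frac{2Q}{Q-2}$): this gives
\begin{align*}
\int_{G}|u|^{p}U^{s}\,dxdt
\le \left(\int_{G}|u|^{\frac{2Q}{Q-2}}dxdt\right)^{\frac{(Q-2)p}{2Q}}
\left(\int_{G}U^{\frac{2Qs}{2Q-(Q-2)p}}dxdt\right)^{\frac{2Q-(Q-2)p}{2Q}}.
\end{align*}
One checks that $\frac{2Qs}{2Q-(Q-2)p}=\frac{2Q}{Q-2}$, so the weight integral on the right is exactly $\int_{G}U^{\frac{2Q}{Q-2}}dxdt$, which is a finite constant: indeed in polar coordinates $U^{\frac{2Q}{Q-2}}\sim \rho^{-2Q}$ at infinity and is bounded near the origin, so the integral converges (this is the one place a short computation is needed, using the explicit form of $U$ in \eqref{2.2} and the polar-coordinate formula).

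Combining the displayed H\"older bound with \eqref{1.1} at $p=2$, namely $\big(\int_{G}|u|^{\frac{2Q}{Q-2}}dxdt\big)^{\frac{Q-2}{Q}}\le C_{0}\int_{G}|\nabla_{G}u|^{2}dxdt$, yields
\begin{align*}
\left(\int_{G}|u|^{p}U^{s}\,dxdt\right)^{\frac{2}{p}}
\le \Big(\int_{G}U^{\frac{2Q}{Q-2}}dxdt\Big)^{\frac{2}{p}\cdot\frac{2Q-(Q-2)p}{2Q}}
\left(\int_{G}|u|^{\frac{2Q}{Q-2}}dxdt\right)^{\frac{Q-2}{Q}}
\le C\int_{G}|\nabla_{G}u|^{2}dxdt,
\end{align*}
which is the asserted inequality with $C^{-1}=C_{0}\big(\int_{G}U^{\frac{2Q}{Q-2}}\big)^{\frac{2(2Q-(Q-2)p)}{2Qp}}$. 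Since all bounds are by density first verified for $u\in C_{0}^{\infty}(G)$ and then extended to $W^{1,2}_{0}(G)$, this completes the argument.

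The only genuine subtlety—hardly an obstacle—is the finiteness of $\int_{G}U^{\frac{2Q}{Q-2}}dxdt$ and, more generally, making sure the H\"older exponents are admissible and that the weighted integral on the right is exactly a power of this same finite constant for every $p$ in the range; the endpoint $p=\frac{2Q}{Q-2}$ is excluded precisely because then $s=0$ and the H\"older step degenerates (one recovers \eqref{1.1} itself), while $p=2$ is the other endpoint where $s=\frac{4}{Q-2}$ and the integrand is $u^2U^{\frac{4}{Q-2}}$, matching the weighted $L^2$ space appearing later in \eqref{1.8}. I would also remark that an alternative, slightly cleaner route is to interpolate directly: write $|u|^p U^s$ and use that $U^{\frac{4}{Q-2}}dxdt$ is a finite measure (equivalently, $U\in L^{\frac{2Q}{Q-2}}$), so the family of norms $\|u\|_{L^p(U^{\frac{2Q}{Q-2}-p}dxdt)}$ interpolates between $\|u\|_{L^2(U^{\frac{4}{Q-2}}dxdt)}$—controlled by the Hardy inequality of Lemma \ref{lm2.1}—and $\|u\|_{L^{\frac{2Q}{Q-2}}(dxdt)}$—controlled by \eqref{1.1}; this makes the uniform constant transparent as $p$ ranges over the compact interval $[2,\frac{2Q}{Q-2})$.
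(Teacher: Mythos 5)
Your argument is correct, but it takes a genuinely different (and slightly more economical) route than the paper. You apply H\"older directly to the product $|u|^{p}\cdot U^{s}$ with the conjugate pair $\bigl(\tfrac{2Q}{(Q-2)p},\tfrac{2Q}{2Q-(Q-2)p}\bigr)$, which pushes \emph{all} of $|u|^{p}$ into the critical $L^{\frac{2Q}{Q-2}}$ norm and turns the weight factor into the fixed finite constant $\int_{G}U^{\frac{2Q}{Q-2}}dxdt$ (your exponent bookkeeping $\tfrac{2Qs}{2Q-(Q-2)p}=\tfrac{2Q}{Q-2}$ checks out, and the finiteness of that integral is exactly the polar-coordinate computation the paper records inside Lemma \ref{lm3.2}); the only external input is then the Folland--Stein inequality \eqref{1.1}. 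The paper instead splits $|u|^{p}U^{\frac{2Q}{Q-2}-p}=\bigl(|u|U^{\frac{2}{Q-2}}\bigr)^{Q-\frac{Q-2}{2}p}|u|^{\frac{Q}{2}(p-2)}$ and applies H\"older so as to land on the product
\begin{align*}
\left(\int_{G}|u|^{2}U^{\frac{4}{Q-2}}dxdt\right)^{\frac{2Q-(Q-2)p}{4}}\left(\int_{G}|u|^{\frac{2Q}{Q-2}}dxdt\right)^{\frac{(Q-2)(p-2)}{4}},
\end{align*}
controlling the first factor by the Hardy inequality of Lemma \ref{lm2.1} and the second by \eqref{1.1}; this is precisely the ``alternative, slightly cleaner route'' you sketch in your closing remark, interpolating between $L^{2}(U^{\frac{4}{Q-2}}dxdt)$ and $L^{\frac{2Q}{Q-2}}(dxdt)$. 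What each approach buys: yours is more elementary (no Hardy inequality, one fewer lemma in the chain, and the constant is transparently uniform on compact $p$-ranges since it is just a power of $\int_{G}U^{\frac{2Q}{Q-2}}dxdt$); the paper's keeps the weighted $L^{2}$ quantity $\int_{G}u^{2}\bigl[(1+\tfrac{|x|^{2}}{4})^{2}+|t|^{2}\bigr]^{-1}dxdt$ in view, which is the quantity that actually drives the subsequent limiting argument $p\nearrow\frac{2Q}{Q-2}$ in the proof of Theorem \ref{th1.2}. Either proof establishes the lemma as stated.
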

\begin{proof}
By H\"older's inequality, we have
\begin{align*}
&\int_{G}|u|^{p}U(x,t)^{\frac{2Q}{Q-2}-p}dxdt\\
=&\int_{G}\left(|u|U(x,t)^{\frac{2}{Q-2}}\right)^{Q-\frac{Q-2}{2}p}|u|^{\frac{Q}{2}(p-2)}dxdt\\
 \leq& \left(\int_{G}|u|^{2}U(x,t)^{\frac{4}{Q-2}}dxdt\right)^{\frac{2Q-(Q-2)p}{4}}
 \left(\int_{G}|u|^{\frac{2Q}{Q-2}}dxdt\right)^{\frac{(Q-2)(p-2)}{4}}\\
 =&\left(\int_{G}\frac{u^{2}}{(1+\frac{|x|^{2}}{4})^{2}+|t|^{2}}dxdt\right)^{\frac{2Q-(Q-2)p}{4}}
 \left(\int_{G}|u|^{\frac{2Q}{Q-2}}dxdt\right)^{\frac{(Q-2)(p-2)}{4}}\\
 \leq& C\int_{G}|\nabla_{G}u|^{2}dxdt,
\end{align*}
where $C$ is a positive constant independent of $u$.
To get the last inequality above, we use Folland-Stein
inequality (\ref{1.1}) and Lemma \ref{lm2.1}. This completes the proof of Lemma \ref{lm3.1}.
\end{proof}

By Lemma \ref{lm3.1}, we have   $W^{1,2}_{0}(G)\hookrightarrow L^{p}(G,
U(x,t)^{\frac{2Q}{Q-2}-p}dxdt).$
Furthermore, the
embedding map is compact.
\begin{lemma}\label{lm3.2}
Let $2\leq p<\frac{2Q}{Q-2}$. The embedding map $W^{1,2}_{0}(G)\hookrightarrow L^{p}(G,
U(x,t)^{\frac{2Q}{Q-2}-p}dxdt)$ is compact.
\end{lemma}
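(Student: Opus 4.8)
The plan is the standard ball–splitting argument: decompose $G=B_R(0)\cup\big(G\setminus B_R(0)\big)$, show that the exterior region contributes a uniformly small amount to the weighted $L^{p}$ norm (because the weight $U^{\frac{2Q}{Q-2}-p}$ is a \emph{positive} power of $U$ and hence decays at infinity), and show that on the fixed ball $B_R(0)$ the weight is bounded so that compactness follows from the ordinary local subelliptic embedding. Concretely, I would start with a sequence $\{u_k\}\subset W^{1,2}_0(G)$ with $\|\nabla_Gu_k\|_{L^2(G)}\le M$; by reflexivity of $W^{1,2}_0(G)$, pass to a subsequence with $u_k\rightharpoonup u$ weakly (so that $\|\nabla_Gu\|_{L^2(G)}\le M$ by weak lower semicontinuity, and $u\in W^{1,2}_0(G)$). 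It then suffices to prove $u_k\to u$ strongly in $L^{p}(G,U^{\frac{2Q}{Q-2}-p}dxdt)$.

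For the exterior estimate, set $a=\frac{2Q}{Q-2}-p>0$ and apply H\"older's inequality to $|u_k|^{p}\cdot U^{a}$ with exponents $q=\frac{2Q}{(Q-2)p}>1$ and $q'=\frac{2Q}{2Q-(Q-2)p}$. A short computation shows $pq=\frac{2Q}{Q-2}$ and $aq'=\frac{2Q}{Q-2}$ as well, so
$$\int_{G\setminus B_R(0)}|u_k|^{p}U^{\frac{2Q}{Q-2}-p}dxdt\le\left(\int_{G}|u_k|^{\frac{2Q}{Q-2}}dxdt\right)^{\frac{(Q-2)p}{2Q}}\left(\int_{G\setminus B_R(0)}U^{\frac{2Q}{Q-2}}dxdt\right)^{\frac{2Q-(Q-2)p}{2Q}}.$$
By the Folland--Stein inequality \eqref{1.1} the first factor is bounded by a constant depending only on $M,p,Q$, uniformly in $k$; and since $\int_{G}U^{\frac{2Q}{Q-2}}dxdt<\infty$ (clear from the explicit form of $U$ — it is essentially the quantity appearing in $S_{m,n}$), the second factor tends to $0$ as $R\to\infty$. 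Thus the exterior integral is uniformly small in $k$, and the same bound holds with $u_k$ replaced by the weak limit $u$.

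For the interior part, I would use that on the fixed ball $B_R(0)$ the function $U$ is bounded above and below by positive constants, so $U^{\frac{2Q}{Q-2}-p}$ is bounded there and it suffices to show $u_k\to u$ in $L^{p}(B_R(0),dxdt)$. Since $u_k\in L^{\frac{2Q}{Q-2}}(G)\subset L^{2}_{\mathrm{loc}}(G)$ by \eqref{1.1} and $\nabla_Gu_k\in L^{2}(G)$, the $u_k$ restrict to a bounded sequence in the subelliptic Sobolev space on $B_{R+1}(0)$, so the Rellich--Kondrachov compactness theorem for Carnot--Carath\'eodory Sobolev spaces on bounded domains (applicable because $p<\frac{2Q}{Q-2}$), together with a diagonal extraction over $R=1,2,\dots$, produces a subsequence with $u_k\to u$ in $L^{p}(B_R(0))$ for every $R$. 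Then I would combine: given $\varepsilon>0$, choose $R$ so that the exterior contributions of $u_k$ and of $u$ are both $<\varepsilon$ for all $k$ (using $|a-b|^{p}\le 2^{p-1}(|a|^{p}+|b|^{p})$ on $G\setminus B_R(0)$), fix that $R$, and let $k\to\infty$ on $B_R(0)$; this gives $\limsup_k\int_{G}|u_k-u|^{p}U^{\frac{2Q}{Q-2}-p}dxdt\le C\varepsilon$, and letting $\varepsilon\to0$ finishes the proof.

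The only genuinely nontrivial ingredient is the local (subelliptic) Rellich--Kondrachov theorem on a gauge ball; everything else is a re-run of the H\"older computation already used for Lemma~\ref{lm3.1}, plus the elementary facts that $\frac{2Q}{Q-2}-p>0$ forces the weight to decay and that $U\in L^{\frac{2Q}{Q-2}}(G)$. The main points to be careful about are that the exterior bound is genuinely independent of $k$ (it is, since it involves only $M$, $p$, $Q$ and the tail $\int_{G\setminus B_R(0)}U^{\frac{2Q}{Q-2}}$), and that the diagonal subsequence in the interior step is selected so that one and the same subsequence converges on \emph{every} ball $B_R(0)$; with those in place the two estimates fit together cleanly.
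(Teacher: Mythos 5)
Your argument is correct and is essentially the paper's proof: the same splitting into $B_R(0)$ and its complement, the same H\"older estimate with exponents $\frac{(Q-2)p}{2Q}$ and $1-\frac{(Q-2)p}{2Q}$ combined with the Folland--Stein inequality and the decay of $\int_{G\setminus B_R(0)}U^{\frac{2Q}{Q-2}}dxdt$, and the same reliance on local compactness of the subelliptic embedding on a gauge ball. The only cosmetic difference is packaging: the paper phrases the interior step as a norm-limit of compact cutoff operators $I_R(u)=u\,\phi(\tfrac{x}{R},\tfrac{t}{R^{2}})$, whereas you run the equivalent sequential argument with weak limits and a diagonal extraction.
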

\begin{proof}
The proof is similar to that given by  Schneider
(see \cite{s}, section 2.2).

Let $\phi : G\rightarrow [0,1]$ be a cut-off function
that is equal to one in $B_{1}(0)$ and zero outside of $B_{2}(0)$.
Consider the operator
\[
I_{R}: W^{1,2}_{0}(G)\hookrightarrow
L^{p}(G,
U(x,t)^{\frac{2Q}{Q-2}-p}dxdt)
\]
defined by
$I_{R}(u)=u(x,t)\phi\left(\frac{x}{R},\frac{t}{R^{2}}\right)$.
Since
the imbedding map $W^{1,2}_{0}(B_{2}(0))\hookrightarrow L^{p}(B_{2}(0))$
is compact, so is  $I_{R}$. Moreover, by H\"older's inequality,
\begin{equation*}
\begin{split}
&\int_{G}|u-I_{R}(u)|^{p}U(x,t)^{\frac{2Q}{Q-2}-p}dxdt\\
\leq& \int_{G\setminus
B_{R}(0)}|u|^{p}U(x,t)^{\frac{2Q}{Q-2}-p}dxdt\\
\leq&\left(\int_{G\setminus
B_{R}(0)}|u|^{\frac{2Q}{Q-2}}dxdt\right)^{\frac{Q-2}{2Q}p}\left(\int_{G\setminus
B_{R}(0)}U(x,t)^{\frac{2Q}{Q-2}}dxdt\right)^{1-\frac{Q-2}{2Q}p}\\
\leq& C\left(\int_{G}|\nabla_{G}u|^{2}dxdt\right)^{\frac{p}{2}}\left(\int_{G\setminus
B_{R}(0)}U(x,t)^{\frac{2Q}{Q-2}}dxdt\right)^{1-\frac{Q-2}{2Q}p}.
\end{split}
\end{equation*}
To get the last inequality above, we use the Folland-Stein
inequality (\ref{1.1}).
By polar coordinates, we have
\begin{align*}
\int_{G\setminus
B_{R}(0)}U(x,t)^{\frac{2Q}{Q-2}}dxdt=&\int_{G\setminus B_{R}(0)}\left[\left(1+\frac{|x|^{2}}{4}\right)^{2}+|t|^{2}\right]^{-\frac{Q}{2}}dxdt\\
\leq& \int_{G\setminus B_{R}(0)}\frac{1}{(\frac{|x|^{2}}{4}+|t|^{2})^{\frac{Q}{2}}}dxdt\\
=&\int^{\infty}_{R}\int_{\Sigma}\frac{1}{\rho^{2Q}}\rho^{Q-1}d\rho d\sigma\\
=&|\Sigma|\frac{1}{QR^{Q}}\rightarrow0,\;\;R\rightarrow\infty,
\end{align*}
where $|\Sigma|$ is the volume of $\Sigma$. Therefore,
 the embedding map $$W^{1,2}_{0}(G)\hookrightarrow L^{p}(G,
U(x,t)^{\frac{2Q}{Q-2}-p}dxdt)$$ is  a limit of
compact operators and thus it is compact.
The proof of Lemma \ref{lm3.2} is thereby completed.
\end{proof}

By Lemma \ref{lm3.2}, the
minimization problem
\begin{align}\label{3.1}
 \Lambda_{p}=\inf\left\{\int_{G}|\nabla_{G}u|^{2}dxdt : \; \int_{G}|u|^{p}U(x,t)^{\frac{2Q}{Q-2}-p}dxdt=1\right\}, \; 2\leq p<\frac{2Q}{Q-2},
\end{align}
has a  positive solution $u$.
We shall show that such $u$ satisfies
a moment zero condition. The main result is the following lemma:
\begin{lemma}\label{lm3.3}
Let $2\leq p<\frac{2Q}{Q-2}$ and $u$ be a  positive solution of (\ref{3.1}).
Then we have
\begin{align}\label{3.2}
\int_{G}u^{p}U(x,t)^{\frac{2Q}{Q-2}-p}\omega_{i}dxdt=0,\;i=1,2,\cdots,m+n+1,
\end{align}
where $\omega_{i}(1\leq i\leq m+n+1)$ is given by (\ref{2.6}).
\end{lemma}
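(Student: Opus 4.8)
The statement is a vanishing-of-moments condition for a minimizer $u$ of the subcritical functional \eqref{3.1}, and the natural strategy is a symmetry/balancing argument combined with the Euler--Lagrange equation. First I would write down the Euler--Lagrange equation satisfied by the minimizer: there is a Lagrange multiplier $\Lambda_{p}>0$ such that
\begin{align*}
-\Delta_{G}u=\Lambda_{p}\,u^{p-1}U(x,t)^{\frac{2Q}{Q-2}-p},\qquad u>0,
\end{align*}
after normalizing $\int_{G}u^{p}U^{\frac{2Q}{Q-2}-p}\,dxdt=1$. The functions $\omega_{i}$, $1\le i\le m+n+1$, are (up to the factors fixed in \eqref{2.6}) the logarithmic derivatives of $U_{\lambda,\eta}$ with respect to the translation parameters $y_{j},w_{r}$ and the dilation parameter $\lambda$ at $(\lambda,\eta)=(1,0)$; by \eqref{2.8} each $U\omega_{i}$ solves the linearized equation $\Delta_{G}(U\omega_{i})+\tfrac{m(Q+2)}{4}U^{\frac{Q+2}{Q-2}}\omega_{i}=0$.

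The key idea is to exploit the invariance of the subcritical problem under the same $(m+n+1)$-parameter family of transformations that fixes $U$. For $\eta\in G$ near $0$ and $\lambda$ near $1$, consider the rescaled-translated competitor $u_{\lambda,\eta}(\xi):=\lambda^{\frac{Q-2}{2}}u(\delta_{\lambda}(\eta^{-1}\circ\xi))$. The Dirichlet energy $\int_{G}|\nabla_{G}u_{\lambda,\eta}|^{2}$ is exactly invariant (this is the conformal invariance of the $L^2$ sub-Laplacian energy under $\delta_\lambda$ and left translations, which is precisely why $S_{m,n}$ and the extremals $U_{\lambda,\eta}$ appear). However the constraint functional $\int_{G}|u_{\lambda,\eta}|^{p}U^{\frac{2Q}{Q-2}-p}\,dxdt$ is \emph{not} invariant when $p<\frac{2Q}{Q-2}$, because the weight $U^{\frac{2Q}{Q-2}-p}$ breaks the symmetry. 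The plan is therefore: (i) introduce the reparametrized family $v=v_{\lambda,\eta}:=u_{\lambda,\eta}/\big(\int_{G}|u_{\lambda,\eta}|^{p}U^{\frac{2Q}{Q-2}-p}\big)^{1/p}$, which is an admissible competitor for \eqref{3.1} for all $(\lambda,\eta)$ in a neighbourhood of $(1,0)$; (ii) since $u=v_{1,0}$ is a minimizer, the function $(\lambda,\eta)\mapsto \int_{G}|\nabla_{G}v_{\lambda,\eta}|^{2}\,dxdt$ has a critical point at $(\lambda,\eta)=(1,0)$, so all its first partial derivatives vanish there; (iii) compute these derivatives and identify them, via \eqref{2.8} and an integration by parts against $U\omega_i$, with the moments $\int_{G}u^{p}U^{\frac{2Q}{Q-2}-p}\omega_{i}\,dxdt$ up to nonzero constants.

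Carrying out (iii): differentiating $\int_{G}|\nabla_{G}u_{\lambda,\eta}|^{2}$ in a parameter $s\in\{y_{j},w_{r},\lambda\}$ gives $2\int_{G}\langle\nabla_{G}u,\nabla_{G}\dot u_s\rangle$ where $\dot u_s=\partial_s u_{\lambda,\eta}|_{(1,0)}$; integrating by parts and using $-\Delta_G u=\Lambda_p u^{p-1}U^{\frac{2Q}{Q-2}-p}$ turns this into $2\Lambda_p\int_{G}u^{p-1}U^{\frac{2Q}{Q-2}-p}\dot u_s$. Meanwhile $\dot u_s$ is, by the very definition \eqref{2.3}--\eqref{2.6} applied with $u$ in place of $U$, a first-order differential operator applied to $u$; the content of the computation in Lemma \ref{lm2.2} is that when $u=U$ these derivatives are exactly $U\cdot(\text{const})\cdot\omega_s$, but for general $u$ one instead integrates by parts once more to move the vector field onto $u^{p-1}U^{\frac{2Q}{Q-2}-p}$ — equivalently one rewrites $\int_{G}u^{p-1}U^{\frac{2Q}{Q-2}-p}\dot u_s$, using $p\,u^{p-1}\dot u_s=\partial_s(|u_{\lambda,\eta}|^p)|_{(1,0)}$ up to the scaling weight, and recognizes the result as a constant multiple of $\int_{G}u^{p}U^{\frac{2Q}{Q-2}-p}\omega_s\,dxdt$ after using the explicit generators of translations/dilations and the skew-symmetry of the $U^{(r)}$ exactly as in Lemma \ref{lm2.2}. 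One must also differentiate the normalization denominator, but since $u$ is a critical point the denominator contributes only through $\int_{G}|\nabla_G u|^2$ times a scalar, and that scalar multiple of the constraint derivative vanishes together with everything else — so the clean conclusion $\int_{G}u^{p}U^{\frac{2Q}{Q-2}-p}\omega_i\,dxdt=0$ survives. The main obstacle is bookkeeping: one needs to justify differentiating under the integral sign (uniform integrability of $u_{\lambda,\eta}$ and its derivatives near $(1,0)$, which follows from the regularity and decay of minimizers together with Lemma \ref{lm3.1}), and one must carefully match the constants from \eqref{2.6} and the factor $\frac{Q-2}{2}$ in the scaling exponent so that the three types of generators ($y_j$, $w_r$, $\lambda$) all produce \emph{nonzero} multiples of $\omega_i$ — this is where the identities proved in Lemma \ref{lm2.2} (in particular $\sum_{i}\omega_i^2=1$ and the precise form of $\partial_\lambda U_{\lambda,\eta}|_{(1,0)}$) do the real work.
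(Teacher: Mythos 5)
Your proposal is correct and follows essentially the same route as the paper: you exploit the $(m+n+1)$-parameter family of translated/dilated competitors, the exact invariance of the Dirichlet energy under this family, and the first-order criticality of the minimizer, so that the vanishing derivative of the weighted $L^{p}$ constraint yields the moments against $\omega_{i}$. The only difference is in the bookkeeping of step (iii): the paper avoids the Euler--Lagrange equation and the extra integration by parts entirely by using the change-of-variables identity $\int_{G}u_{\lambda^{-1},\eta^{-1}}^{p}U^{\frac{2Q}{Q-2}-p}\,dxdt=\int_{G}u^{p}U_{\lambda,\eta}^{\frac{2Q}{Q-2}-p}\,dxdt$, so that differentiating in $(\lambda,\eta)$ lands directly on $U_{\lambda,\eta}$ and produces $\omega_{i}$ via (\ref{2.6}) with no divergence terms to cancel.
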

\begin{proof}
For simplicity, we set
\begin{align*}
\mathcal{F}_{p}(u)=\frac{\int_{G}|\nabla_{G}u|^{2}dxdt}{\left(\int_{G}|u|^{p}U(x,t)^{\frac{2Q}{Q-2}-p}dxdt\right)^{\frac{2}{p}}}
\end{align*}
and
\begin{align*}
u_{\lambda^{-1},\eta^{-1}}(\xi)=&\lambda^{-\frac{Q-2}{2}}u(\delta_{\lambda^{-1}}(\eta\circ\xi)),\; \lambda>0,\;\eta=(y_{1},\cdots,y_{m},w_{1},\cdots,w_{n})\in G.
\end{align*}
A simple calculation shows
\begin{align*}
 \int_{G}|\nabla_{G} u_{\lambda^{-1},\eta^{-1}}|^{2}dxdt=&\int_{G}|\nabla_{G}u|^{2}dxdt;\\
 \int_{G}u_{\lambda^{-1},\eta^{-1}}^{p}U(x,t)^{\frac{2Q}{Q-2}-p}dxdt=&\int_{G}u^{p}U_{\lambda,\eta}(x,t)^{\frac{2Q}{Q-2}-p}dxdt,
\end{align*}
where $U_{\lambda,\eta}$ is given by (\ref{2.3}). Therefore,
\begin{align}\label{3.3}
\mathcal{F}_{p}(u_{\lambda^{-1},\eta^{-1}}(\xi))=
\frac{\int_{G}|\nabla_{G}u|^{2}dxdt}{\left(\int_{G}|u|^{p}U_{\lambda,\eta}(x,t)^{\frac{2Q}{Q-2}-p}dxdt\right)^{\frac{2}{p}}}.
\end{align}
Since $u$ is a  positive solution of (\ref{3.1}), we have
\begin{equation}\label{3.4}
  \begin{split}
    \frac{\partial}{\partial y_{j}}\mathcal{F}_{p}(u_{\lambda^{-1},\eta^{-1}}(\xi))|_{\lambda=1,\eta=0}=&0,\; j=1,\cdots,m;\\
    \frac{\partial}{\partial w_{r}}\mathcal{F}_{p}(u_{\lambda^{-1},\eta^{-1}}(\xi))|_{\lambda=1,\eta=0}=&0,\; r=1,\cdots,n;\\
    \frac{\partial}{\partial \lambda}\mathcal{F}_{p}(u_{\lambda^{-1},\eta^{-1}}(\xi))|_{\lambda=1,\eta=0}=&0.
  \end{split}
\end{equation}
Combining  (\ref{3.3}) and (\ref{3.4}) yields (\ref{3.2}).
This completes the proof of Lemma \ref{lm3.3}.
\end{proof}

\begin{remark}
We remark that Lemma \ref{lm3.3} is also valid for $u>0$  satisfying the  Yamabe-type equation
\begin{align*}
\Delta_{G}u+\Lambda_{p}u^{p-1}U(x,t)^{\frac{2Q}{Q-2}-p}=0,\;\; \int_{G}|\nabla_{G}u|^{2}dxdt<\infty.
\end{align*}
The proof is same and we omit it (see \cite{hang}, Corollary 1  for the case of CR sphere).
\end{remark}

\begin{lemma}\label{lm3.4}
It holds that, for any $u\in W^{1,2}_{0}(G)$,
\begin{align*}
\sum_{i=1}^{m+n+1}\int_{G}|\nabla_{G}(u\omega_{i})|^{2}dxdt=\int_{G}|\nabla_{G}u|^{2}dxdt+m
\int_{G}\frac{u^{2}}{(1+\frac{|x|^{2}}{4})^{2}+|t|^{2}}dxdt.
\end{align*}
\end{lemma}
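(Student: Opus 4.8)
The plan is to expand the left-hand side by the Leibniz rule and reduce the whole statement to a single pointwise identity for $\sum_{i}|\nabla_{G}\omega_{i}|^{2}$.

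\textbf{Step 1 (expanding the square).} First I would write $\nabla_{G}(u\omega_{i})=\omega_{i}\nabla_{G}u+u\nabla_{G}\omega_{i}$, so that
\begin{align*}
\sum_{i=1}^{m+n+1}|\nabla_{G}(u\omega_{i})|^{2}=\Big(\sum_{i}\omega_{i}^{2}\Big)|\nabla_{G}u|^{2}+2u\Big\langle\nabla_{G}u,\sum_{i}\omega_{i}\nabla_{G}\omega_{i}\Big\rangle+u^{2}\sum_{i}|\nabla_{G}\omega_{i}|^{2}.
\end{align*}
By (\ref{2.7}), $\sum_{i}\omega_{i}^{2}=1$, hence $\sum_{i}\omega_{i}\nabla_{G}\omega_{i}=\tfrac12\nabla_{G}\big(\sum_{i}\omega_{i}^{2}\big)=0$; so the cross term drops out and the first term collapses to $|\nabla_{G}u|^{2}$. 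Thus, after integrating, the lemma follows from the pointwise identity
\begin{align*}
\sum_{i=1}^{m+n+1}|\nabla_{G}\omega_{i}|^{2}=mU(\xi)^{\frac{4}{Q-2}}=\frac{m}{(1+\frac{|x|^{2}}{4})^{2}+|t|^{2}}.
\end{align*}

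\textbf{Step 2 (the pointwise identity).} This is the heart of the argument and the only place where both (\ref{2.4}) and (\ref{2.8}) are needed. I would apply the product rule $\Delta_{G}(fg)=g\,\Delta_{G}f+2\langle\nabla_{G}f,\nabla_{G}g\rangle+f\,\Delta_{G}g$ to $\Delta_{G}(U\omega_{i})$ in (\ref{2.8}), eliminate $\Delta_{G}U$ using (\ref{2.4}), and observe that the two multiples of $U^{\frac{Q+2}{Q-2}}\omega_{i}$ combine, since $\tfrac{m(Q-2)}{4}-\tfrac{m(Q+2)}{4}=-m$, to
\begin{align*}
2\langle\nabla_{G}U,\nabla_{G}\omega_{i}\rangle+U\,\Delta_{G}\omega_{i}=-mU^{\frac{Q+2}{Q-2}}\omega_{i},\qquad 1\le i\le m+n+1.
\end{align*}
Then I would multiply by $\omega_{i}$ and sum over $i$: the first term becomes $\big\langle\nabla_{G}U,\tfrac12\nabla_{G}(\sum_{i}\omega_{i}^{2})\big\rangle=0$ by (\ref{2.7}); for the second, $\sum_{i}\omega_{i}\Delta_{G}\omega_{i}=\tfrac12\Delta_{G}(\sum_{i}\omega_{i}^{2})-\sum_{i}|\nabla_{G}\omega_{i}|^{2}=-\sum_{i}|\nabla_{G}\omega_{i}|^{2}$; and on the right $\sum_{i}\omega_{i}^{2}=1$. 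Dividing the resulting relation $U\sum_{i}|\nabla_{G}\omega_{i}|^{2}=mU^{\frac{Q+2}{Q-2}}$ by $U>0$, and using $U^{\frac{4}{Q-2}}=[(1+\frac{|x|^{2}}{4})^{2}+|t|^{2}]^{-1}$, gives the identity of Step 1.

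\textbf{Step 3 (assembling).} Combining the two steps gives, pointwise, $\sum_{i}|\nabla_{G}(u\omega_{i})|^{2}=|\nabla_{G}u|^{2}+m u^{2}[(1+\frac{|x|^{2}}{4})^{2}+|t|^{2}]^{-1}$, and integrating over $G$ yields the claim. I would perform the computation first for $u\in C_{0}^{\infty}(G)$ and pass to general $u\in W^{1,2}_{0}(G)$ by density: by Step 2 and (\ref{2.7}) one has $|\omega_{i}|\le1$ and $|\nabla_{G}\omega_{i}|^{2}\le m[(1+\frac{|x|^{2}}{4})^{2}+|t|^{2}]^{-1}$, so $u\mapsto u\omega_{i}$ is bounded on $W^{1,2}_{0}(G)$ and the right-hand side is finite by Lemma \ref{lm2.1} (the Hardy inequality), so both sides are continuous in $u$. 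I do not expect a serious obstacle; the only substantive point is the pointwise identity of Step 2 --- recognising that subtracting (\ref{2.4}) from (\ref{2.8}), contracting with $\omega_{i}$, and invoking $\sum_{i}\omega_{i}^{2}=1$ collapses everything to $\sum_{i}|\nabla_{G}\omega_{i}|^{2}=mU^{\frac{4}{Q-2}}$, while everything else is bookkeeping.
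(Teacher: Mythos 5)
Your argument is correct, and it reaches the conclusion by a genuinely different (and in fact slightly stronger) route than the paper. The paper substitutes $u=Uv$, expands $\sum_{i}\int|\nabla_{G}(vU\omega_{i})|^{2}$, integrates the cross term by parts to produce $-\int v^{2}U\omega_{i}\Delta_{G}(U\omega_{i})$, applies (\ref{2.8}), and then converts back via the identity (\ref{3.6}); the result is obtained only as an integral identity. You instead expand directly in $u$ and $\omega_{i}$, use (\ref{2.7}) to kill the cross term, and reduce everything to the pointwise identity $\sum_{i}|\nabla_{G}\omega_{i}|^{2}=mU^{4/(Q-2)}$, which you derive by subtracting (\ref{2.4}) from the Leibniz expansion of (\ref{2.8}) and contracting with $\omega_{i}$; this contraction is essentially the pointwise shadow of the paper's integration by parts, but it yields the stronger statement
\begin{align*}
\sum_{i=1}^{m+n+1}|\nabla_{G}(u\omega_{i})|^{2}=|\nabla_{G}u|^{2}+\frac{m\,u^{2}}{(1+\frac{|x|^{2}}{4})^{2}+|t|^{2}}
\end{align*}
valid at every point, with no boundary terms or substitution $u=Uv$ needed. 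What the paper's route buys is that it recycles the already-established computation (\ref{b2.1})/(\ref{3.6}) from Lemma \ref{lm2.1} and never needs to know $\sum_{i}|\nabla_{G}\omega_{i}|^{2}$ explicitly; what your route buys is a cleaner structural statement (the $\{\omega_{i}\}$ behave like coordinate functions of an isometric-type map, with $\sum_{i}\omega_{i}^{2}=1$ and constant ``energy density'' $mU^{4/(Q-2)}$) and an elementary density argument in place of integration by parts. Your handling of the passage from $C_{0}^{\infty}(G)$ to $W^{1,2}_{0}(G)$ via the bounds $|\omega_{i}|\le 1$, $|\nabla_{G}\omega_{i}|^{2}\le m[(1+\frac{|x|^{2}}{4})^{2}+|t|^{2}]^{-1}$ and the Hardy inequality of Lemma \ref{lm2.1} is sound.
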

\begin{proof}
 Let $u=U(\xi)v$. We compute, through integration by parts,
\begin{align}
 \sum_{i=1}^{m+n+1}\int_{G}|\nabla_{G}(u\omega_{i})|^{2}dxdt=&\sum_{i=1}^{m+n+1}\int_{G}|\nabla_{G}(vU\omega_{i})|^{2}dxdt\nonumber \\
 =&\sum_{i=1}^{m+n+1}\int_{G}|U\omega_{i}\nabla_{G}v+v\nabla_{G}(U\omega_{i}) |^{2}dxdt\nonumber \\
 =&\sum_{i=1}^{m+n+1}\left(\int_{G}|\nabla_{G}v|^{2}U^{2}\omega_{i}^{2}dxdt+ \int_{G}|\nabla_{G}(U\omega_{i})|^{2}v^{2}dxdt+\right.\nonumber \\
 &\left. \frac{1}{2}\int_{G}\langle \nabla_{G}(U\omega_{i})^{2}, \nabla_{G}v^{2} \rangle dxdt\right)\nonumber \\
 =&\sum_{i=1}^{m+n+1}\left(\int_{G}|\nabla_{G}v|^{2}U^{2}\omega_{i}^{2}dxdt-\int_{G}v^{2}U\omega_{i}\Delta_{G}(U\omega_{i})dxdt\right)\nonumber \\
 =&\int_{G}|\nabla_{G}v|^{2}U^{2}dxdt+\frac{m(Q+2)}{4}\int_{G}v^{2}U^{\frac{2Q}{Q-2}}dxdt. \label{3.5}
\end{align}
To get the last equality, we use (\ref{2.8}).
On the other hand, by (\ref{b2.1}), we have
\begin{align}
\int_{G}|\nabla_{G}v|^{2}U^{2}dxdt=&\int_{G}\left|\nabla_{G}\frac{u}{U}\right|^{2}U^{2}dxdt \nonumber \\
=&\int_{G}|\nabla_{G}u|^{2}dxdt- \frac{m(Q-2)}{4}\int_{G}\frac{u^{2}}{(1+\frac{|x|^{2}}{4})^{2}+|t|^{2}}dxdt.  \label{3.6}
\end{align}
Substituting (\ref{3.6}) into (\ref{3.5}), we obtain
\begin{align*}
\sum_{i=1}^{m+n+1}\int_{G}|\nabla_{G}(u\omega_{i})|^{2}dxdt=\int_{G}|\nabla_{G}u|^{2}dxdt+m
\int_{G}\frac{u^{2}}{(1+\frac{|x|^{2}}{4})^{2}+|t|^{2}}dxdt.
\end{align*}
This completes the proof of Lemma \ref{lm3.4}.

\end{proof}

Now we can give the proof of Theorem \ref{th1.2}. The idea is due to Frank and Lieb \cite{fl1,fl2} and Hang and Wang \cite{hang}. \
\\

\textbf{Proof of Theorem \ref{th1.2}}. Let $2\leq p<\frac{2Q}{Q-2}$ and $u_{p}$ be a  positive solution of (\ref{3.1}).
The 2nd variation of the functional $\mathcal{F}_{p}$ around $u_{p}$ shows that
\begin{align*}
&\int_{G}|\nabla_{G}f|^{2}dxdt\int_{G}u_{p}^{p}U(x,t)^{\frac{2Q}{Q-2}-p}dxdt-\\
&
(p-1)
\int_{G}|\nabla_{G}u_{p}|^{2}dxdt \int_{G}u_{p}^{p-2}U_{\lambda,\eta}(x,t)^{\frac{2Q}{Q-2}-p}f^{2}dxdt\geq0
\end{align*}
for any $f$ with
\begin{align*}
\int_{G}u_{p}^{p}U_{\lambda,\eta}(x,t)^{\frac{2Q}{Q-2}-p}fdxdt=0.
\end{align*}
By Lemma \ref{lm3.3},  we may choose $f=u_{p}\omega_{i}$, $i=1,2,\cdots,m+n+1$.
Summing the corresponding inequalities for all such $f$'s yields,  in view of (\ref{2.7}) and Lemma \ref{lm3.4},
\begin{align*}
0\leq&\sum_{i=1}^{m+n+1}\int_{G}|\nabla_{G}(u_{p}\omega_{i})|^{2}dxdt-(p-1)\int_{G}|\nabla_{G}u_{p}|^{2}dxdt\\
=&m\int_{G}\frac{u_{p}^{2}}{(1+\frac{|x|^{2}}{4})^{2}+|t|^{2}}dxdt-(p-2)
\int_{G}|\nabla_{G}u_{p}|^{2}dxdt,
\end{align*}
i.e.
\begin{align*}
&(p-2)\left(\int_{G}|\nabla_{G}u_{p}|^{2}dxdt-\frac{m(Q-2)}{4}\int_{G}\frac{u_{p}^{2}}{(1+\frac{|x|^{2}}{4})^{2}+|t|^{2}}dxdt\right)\\
\leq&\frac{m(Q-2)}{4}\left(\frac{2Q}{Q-2}-p\right)\int_{G}\frac{u_{p}^{2}}{(1+\frac{|x|^{2}}{4})^{2}+|t|^{2}}dxdt\\
\leq&\frac{m(Q-2)}{4}\left(\frac{2Q}{Q-2}-p\right)\left(\int_{G}u_{p}^{p}U(x,t)^{\frac{2Q}{Q-2}-p}dxdt\right)^{\frac{2}{p}}
\left(\int_{G}U(x,t)^{\frac{2Q}{Q-2}}dxdt\right)^{1-\frac{2}{p}}\\
=&\frac{m(Q-2)}{4}\left(\frac{2Q}{Q-2}-p\right)
\left(\int_{G}U(x,t)^{\frac{2Q}{Q-2}}dxdt\right)^{1-\frac{2}{p}}\rightarrow0,\; p\nearrow \frac{2Q}{Q-2}.
\end{align*}
To get the last equality, we use the fact $\int_{G}u_{p}^{p}U(x,t)^{\frac{2Q}{Q-2}-p}dxdt=1$.
Therefore, by Lemma \ref{lm2.1}, we obtain
\begin{align*}
\int_{G}|\nabla_{G}u_{p}|^{2}dxdt-\frac{m(Q-2)}{4}\int_{G}\frac{u_{p}^{2}}{(1+\frac{|x|^{2}}{4})^{2}+|t|^{2}}dxdt\rightarrow0,\; p\nearrow \frac{2Q}{Q-2},
\end{align*}
or equivalently,
\begin{align*}
\int_{G}|\nabla_{G}(U^{-1}u_{p})|^{2}U^{2}dxdt\rightarrow0,\; p\nearrow \frac{2Q}{Q-2}.
\end{align*}
So
we can choose a sequence $\{p_{k}: k=1,2,\cdots \}$
such that $ p_{k}\nearrow \frac{2Q}{Q-2}$ and   $u_{p_{k}}$ converges to a nonzero function $c_{0}U$
(for reader's convenience, we prove it in Lemma \ref{lm3.5}). Thus $c_{0}U$  is an  extremal function of
 \begin{align*}
  \Lambda =\inf\left\{\int_{G}|\nabla_{G}u|^{2}dxdt : \; \int_{G}|u|^{\frac{2Q}{Q-2}}dxdt=1\right\}.
 \end{align*}
 The value $S_{m,n}$ has been calculated in \cite{gv1}, Theorem 1.6. The proof of Theorem \ref{th1.2} is thereby completed.

 \begin{lemma}\label{lm3.5}
Let $u_{p} (2\leq p<\frac{2Q}{Q-2})$ be a  positive solution of (\ref{3.1}). If
\begin{align*}
\int_{G}|\nabla_{G}u_{p}|^{2}dxdt-\frac{m(Q-2)}{4}\int_{G}\frac{u_{p}^{2}}{(1+\frac{|x|^{2}}{4})^{2}+|t|^{2}}dxdt\rightarrow0,\; p\nearrow \frac{2Q}{Q-2},
\end{align*}
then there exists $c_{0}>0$ and a sequence $\{p_{k}: k=1,2,\cdots \}$
such that $ p_{k}\nearrow \frac{2Q}{Q-2}$ and
\begin{align*}
\int_{G}|\nabla_{G}(u_{p_{k}}-c_{0}U)|^{2}dxdt\rightarrow0,\;k\rightarrow\infty.
\end{align*}
 \end{lemma}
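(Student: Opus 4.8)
The plan is to extract a subsequence along which $u_p$ converges weakly in $W^{1,2}_0(G)$, identify the weak limit as a multiple of $U$ using the hypothesis, and then upgrade weak convergence to strong convergence by a norm comparison. First I would observe that since $\int_G u_p^p\, U^{\frac{2Q}{Q-2}-p}\,dxdt = 1$, Lemma \ref{lm3.1} gives a lower bound $\Lambda_p = \int_G |\nabla_G u_p|^2\,dxdt \geq C$ for a constant $C>0$ that can be taken uniform for $p$ in a closed subinterval of $[2,\frac{2Q}{Q-2})$; and an upper bound $\Lambda_p \leq C'$ follows by plugging the fixed test function $U$ (suitably normalized) into the quotient $\mathcal F_p$, using $\int_G U^{\frac{2Q}{Q-2}}\,dxdt < \infty$ and dominated convergence as $p \nearrow \frac{2Q}{Q-2}$. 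Hence $\{u_p\}$ is bounded in $W^{1,2}_0(G)$, and we may pick $p_k \nearrow \frac{2Q}{Q-2}$ with $u_{p_k} \rightharpoonup u_*$ weakly in $W^{1,2}_0(G)$ for some $u_* \geq 0$.

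Next I would use Lemma \ref{lm3.2}: the embedding $W^{1,2}_0(G) \hookrightarrow L^2(G, U^{\frac{4}{Q-2}}dxdt)$ is compact, so $u_{p_k} \to u_*$ strongly in $L^2(G, U^{\frac{4}{Q-2}}dxdt)$, i.e. $\int_G \frac{u_{p_k}^2}{(1+\frac{|x|^2}{4})^2+|t|^2}\,dxdt \to \int_G \frac{u_*^2}{(1+\frac{|x|^2}{4})^2+|t|^2}\,dxdt$. By weak lower semicontinuity of the Dirichlet norm together with the hypothesis,
\begin{align*}
\int_G |\nabla_G u_*|^2\,dxdt &\leq \liminf_{k\to\infty} \int_G |\nabla_G u_{p_k}|^2\,dxdt \\
&= \liminf_{k\to\infty} \frac{m(Q-2)}{4}\int_G \frac{u_{p_k}^2}{(1+\frac{|x|^2}{4})^2+|t|^2}\,dxdt = \frac{m(Q-2)}{4}\int_G \frac{u_*^2}{(1+\frac{|x|^2}{4})^2+|t|^2}\,dxdt.
\end{align*}
On the other hand, Lemma \ref{lm2.1} gives the reverse inequality, so $u_*$ is an extremal for the Hardy inequality, forcing $u_* = c_0 U$ for some constant $c_0 \geq 0$ by the equality case of Lemma \ref{lm2.1}. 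To see $c_0 > 0$, I would argue that $c_0 = 0$ would force $\int_G |\nabla_G u_{p_k}|^2\,dxdt \to 0$ (chaining the hypothesis with the strong $L^2(U^{\frac{4}{Q-2}}dxdt)$ convergence to $0$), contradicting the uniform lower bound $\Lambda_{p_k} \geq C > 0$ established above.

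Finally I would promote the convergence to strong convergence in $W^{1,2}_0(G)$. Writing $v_k = U^{-1} u_{p_k}$ and $v_* = c_0$, the hypothesis says $\int_G |\nabla_G v_k|^2 U^2\,dxdt \to 0$ (this is exactly the identity displayed just before Lemma \ref{lm3.5} in the main proof). Then
\begin{align*}
\int_G |\nabla_G(u_{p_k} - c_0 U)|^2\,dxdt &= \int_G |\nabla_G((v_k - v_*)U)|^2\,dxdt \\
&= \int_G |\nabla_G v_k|^2 U^2\,dxdt - \frac{m(Q-2)}{4}\int_G \frac{(u_{p_k}-c_0U)^2}{(1+\frac{|x|^2}{4})^2+|t|^2}\,dxdt + \cdots,
\end{align*}
where the middle manipulation is the integration-by-parts identity \eqref{b2.1} applied to $u_{p_k} - c_0 U$; the first term tends to $0$ by the displayed hypothesis applied to $v_k$, and the Hardy-weighted term tends to $0$ by the strong $L^2(U^{\frac{4}{Q-2}}dxdt)$ convergence $u_{p_k} \to c_0 U$ obtained via Lemma \ref{lm3.2}. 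Combining these gives $\int_G |\nabla_G(u_{p_k}-c_0U)|^2\,dxdt \to 0$, as claimed. The main obstacle is the bookkeeping in this last step: one must be careful that the cross terms coming from \eqref{b2.1} are genuinely controlled by the two quantities that go to zero, which ultimately reduces to the fact that $\int_G |\nabla_G(u_{p_k}-c_0U)|^2\,dxdt$ and $\int_G \frac{(u_{p_k}-c_0U)^2}{(1+\frac{|x|^2}{4})^2+|t|^2}\,dxdt$ differ exactly by $\int_G |\nabla_G(v_k-v_*)|^2 U^2\,dxdt$ via \eqref{b2.1}, with no leftover terms.
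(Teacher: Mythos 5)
Your proof is correct, but it takes a genuinely different route from the paper's. The paper decomposes $u_p=\lambda_pU+v_p$ with $v_p$ orthogonal to $U$ in both the Dirichlet inner product and $L^2(U^{4/(Q-2)}dxdt)$, and then invokes the spectral gap $\mu_2>\mu_1$ of the weighted eigenvalue problem \eqref{1.8} (available already from the simplicity of $\mu_1$ in Lemma \ref{lm2.1} together with the discreteness of the spectrum coming from Lemma \ref{lm3.2}) to obtain the quantitative bound $\int_G|\nabla_Gv_p|^2dxdt\le\frac{\mu_2}{\mu_2-\mu_1}\bigl[\int_G|\nabla_Gu_p|^2dxdt-\mu_1\int_G\frac{u_p^2}{(1+|x|^2/4)^2+|t|^2}dxdt\bigr]\to0$; the scalar $\lambda_p$ is then controlled by Minkowski and H\"older and a convergent subsequence of scalars is extracted. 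You avoid the eigenvalue decomposition entirely: uniform two-sided bounds on $\Lambda_p$, weak compactness, the compact embedding of Lemma \ref{lm3.2} and weak lower semicontinuity show that the weak limit $u_*$ saturates the Hardy inequality, and the equality case of Lemma \ref{lm2.1} identifies $u_*=c_0U$; strong convergence then follows from the exact identity \eqref{3.6}. Both arguments rest on the same two pillars (Lemma \ref{lm2.1} and Lemma \ref{lm3.2}), but the paper's version is quantitative --- it bounds $\|\nabla_Gv_p\|_2^2$ by an explicit multiple of the Hardy deficit, in the Bianchi--Egnell spirit --- whereas yours is a softer compactness argument that trades the spectral gap for the rigidity statement in the Hardy inequality. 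One small slip: identity \eqref{3.6} gives $\int_G|\nabla_Gu|^2dxdt=\int_G|\nabla_G(U^{-1}u)|^2U^2dxdt+\frac{m(Q-2)}{4}\int_G\frac{u^2}{(1+|x|^2/4)^2+|t|^2}dxdt$, so in your final display the Hardy term should enter with a plus sign and coefficient $\frac{m(Q-2)}{4}$, not a minus; since both terms on the right tend to zero this does not affect the conclusion, and your closing worry is unfounded --- there are indeed no leftover cross terms.
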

\begin{proof}
By Lemma \ref{lm2.1}, $\mu_{1}=\frac{m(Q-2)}{4}$ is simple with eigenfunction
$U$ of (\ref{1.8}) with $\lambda=1$ and $\eta=0$.
Decompose $u_{p}$ as
\begin{align}\label{3.7}
u_{p}=\lambda_{p}U+v_{p}
\end{align}
with
\begin{align*}
\lambda_{p}=\frac{\int_{G}U^{\frac{Q+2}{Q-2}}u_{p}dxdt}{\int_{G}U^{\frac{2Q}{Q-2}}dxdt}>0.
\end{align*}
Then $v_{p}\perp U$, i.e.
\begin{align}\label{3.8}
\int_{G}U^{\frac{4}{Q-2}}\cdot Uv_{p}dxdt=\int_{G}U^{\frac{Q+2}{Q-2}}v_{p}dxdt=0,\; \int_{G}\langle\nabla_{G} U, \nabla_{G}v_{p}\rangle dxdt=0.
\end{align}
Therefore, we have
\begin{align}\label{3.9}
\int_{G}|\nabla_{G}v_{p}|^{2}dxdt\geq \mu_{2}\int_{G}U^{\frac{4}{Q-2}}\cdot v^{2}_{p}dxdt= \mu_{2}\int_{G}\frac{v_{p}^{2}}{(1+\frac{|x|^{2}}{4})^{2}+|t|^{2}}dxdt,
\end{align}
where $\mu_{2}$ is the second eigenvalue
of (\ref{1.8}) with with $\lambda=1$ and $\eta=0$. We compute, by using (\ref{3.8}) and (\ref{3.9}),
\begin{align*}
&\int_{G}|\nabla_{G}u_{p}|^{2}dxdt-\frac{m(Q-2)}{4}\int_{G}\frac{u_{p}^{2}}{(1+\frac{|x|^{2}}{4})^{2}+|t|^{2}}dxdt\\
=&\int_{G}\left(\lambda_{p}^{2}|\nabla_{G}U|^{2}+|\nabla_{G}v_{p}|^{2}\right)dxdt-\mu_{1}\int_{G}\frac{\lambda_{p}^{2}U^{2}+
v_{p}^{2}}{(1+\frac{|x|^{2}}{4})^{2}+|t|^{2}}dxdt\\
=&\int_{G}|\nabla_{G}v_{p}|^{2}dxdt-\mu_{1}\int_{G}\frac{v_{p}^{2}}{(1+\frac{|x|^{2}}{4})^{2}+|t|^{2}}dxdt\\
=&\frac{\mu_{1}}{\mu_{2}}\left(\int_{G}|\nabla_{G}v_{p}|^{2}dxdt- \mu_{2}\int_{G}\frac{u_{p}^{2}}{(1+\frac{|x|^{2}}{4})^{2}+|t|^{2}}dxdt\right)+\\
&\frac{\mu_{2}-\mu_{1}}{\mu_{2}}\int_{G}|\nabla_{G}v_{p}|^{2}dxdt\\
\geq&\frac{\mu_{2}-\mu_{1}}{\mu_{2}}\int_{G}|\nabla_{G}v_{p}|^{2}dxdt.
\end{align*}
Therefore,
\begin{align}
&\int_{G}|\nabla_{G}v_{p}|^{2}dxdt\nonumber\\
\leq&\frac{\mu_{2}}{\mu_{2}-\mu_{1}}
\left[\int_{G}|\nabla_{G}u_{p}|^{2}dxdt-\frac{m(Q-2)}{4}\int_{G}\frac{u_{p}^{2}}{(1+\frac{|x|^{2}}{4})^{2}+|t|^{2}}dxdt\right]\nonumber\\
&\rightarrow 0,\; p\nearrow \frac{2Q}{Q-2}. \label{3.10}
\end{align}

On the other hand, by (\ref{3.7}),  Minkowski's inequalities,  H\"older's inequality  and (\ref{1.1}), we have
\begin{equation}\label{3.11}
  \begin{split}
 &\lambda_{p}\left(\int_{G}U(x,t)^{\frac{2Q}{Q-2}}dxdt\right)^{\frac{1}{p}}\\
=&
\left(\int_{G}(u_{p}-v_{p})^{p}U(x,t)^{\frac{2Q}{Q-2}-p}dxdt\right)^{\frac{1}{p}}\\
\leq&\left(\int_{G}u_{p}^{p}U(x,t)^{\frac{2Q}{Q-2}-p}dxdt\right)^{\frac{1}{p}}+
\left(\int_{G}|v_{p}|^{p}U(x,t)^{\frac{2Q}{Q-2}-p}dxdt\right)^{\frac{1}{p}}\\
=&1+\left(\int_{G}|v_{p}|^{p}U(x,t)^{\frac{2Q}{Q-2}-p}dxdt\right)^{\frac{1}{p}}\\
\leq&1+\left(\int_{G}|v_{p}|^{\frac{2Q}{Q-2}}dxdt\right)^{\frac{Q-2}{2Q}}
\left(\int_{G}U^{\frac{2Q}{Q-2}}dxdt\right)^{\frac{1}{p}-\frac{Q-2}{2Q}}\\
\leq&1+C\left(\int_{G}|\nabla_{G}v_{p}|^{2}dxdt\right)^{\frac{1}{2}}
\left(\int_{G}U^{\frac{2Q}{Q-2}}dxdt\right)^{\frac{1}{p}-\frac{Q-2}{2Q}}.
  \end{split}
\end{equation}
Substituting (\ref{3.10}) into (\ref{3.11}), we obtain
\begin{align*}
\limsup_{p\nearrow \frac{2Q}{Q-2}}\lambda_{p}\leq \left(\int_{G}U(x,t)^{\frac{2Q}{Q-2}}dxdt\right)^{-\frac{Q-2}{2Q}}.
\end{align*}
Therefore, there exists $c_{0}\geq 0$ and a sequence $\{p_{k}: k=1,2,\cdots \}$
such that $ p_{k}\nearrow \frac{2Q}{Q-2}$ and
\begin{align}\label{3.12}
\lambda_{p_{k}}\rightarrow c_{0},\; k\rightarrow\infty.
\end{align}

We claim that
\begin{align}\label{3.13}
\int_{G}|\nabla_{G}(u_{p_{k}}-c_{0}U)|^{2}dxdt\rightarrow0,\;k\rightarrow\infty.
\end{align}
In fact, by using (\ref{3.7}),  (\ref{3.10}) and (\ref{3.12}), we obtain
\begin{align*}
&\int_{G}|\nabla_{G}(u_{p_{k}}-c_{0}U)|^{2}dxdt\\
=&\int_{G}|\nabla_{G}(v_{p_{k}}+(\lambda_{p_{k}}-c_{0})U)|^{2}dxdt\\
=&\int_{G}|\nabla_{G}v_{p_{k}}|^{2}dxdt+(\lambda_{p_{k}}-c_{0})^{2}\int_{G}|\nabla_{G}U|^{2}dxdt\rightarrow0,\;k\rightarrow\infty.
\end{align*}
This proves the claim.

Finally, we show that $c_{0}>0$. In fact, if $c_{0}=0$, then by (\ref{3.13}),
\begin{align*}
\int_{G}|\nabla_{G}u_{p_{k}}|^{2}dxdt\rightarrow0,\;k\rightarrow\infty.
\end{align*}
On the other hand, by H\"older's inequality and (\ref{1.1}), we obtain
\begin{align*}
1=&\left(\int_{G}u_{p_{k}}^{p_{k}}U(x,t)^{\frac{2Q}{Q-2}-p_{k}}dxdt\right)^{\frac{1}{p_{k}}}\\
\leq&\left(\int_{G}|u_{p_{k}}|^{\frac{2Q}{Q-2}}dxdt\right)^{\frac{Q-2}{2Q}}
\left(\int_{G}U^{\frac{2Q}{Q-2}}dxdt\right)^{\frac{1}{p_{k}}-\frac{Q-2}{2Q}}\\
\leq&C\left(\int_{G}|\nabla_{G}u_{p_{k}}|^{2}dxdt\right)^{\frac{1}{2}}
\left(\int_{G}U^{\frac{2Q}{Q-2}}dxdt\right)^{\frac{1}{p_{k}}-\frac{Q-2}{2Q}}\rightarrow0,\;k\rightarrow\infty,
\end{align*}
which is a contradiction. So $c_{0}>0$.
The proof of Lemma \ref{lm3.5} is thereby completed.
\end{proof}

Finally, we give the proof of Theorem \ref{th1.3}.\
\\

\textbf{Proof of Theorem \ref{th1.3}}  A simple scaling argument shows that
the eigenvalues do not depend on $\lambda$ and $\eta$. So we may
assume  $\lambda=1$ and $\eta=0$.

From Lemma \ref{lm2.1} we know that $\mu_{1}=\frac{m(Q-2)}{4}$ is simple with eigenfunction
$U$.

Nextly, we  show $\mu_{2}\geq \frac{m(Q+2)}{4}$.  Let $V\neq 0$ be a eigenfunction of
$\mu_{2}$. Then
\begin{equation}\label{3.14}
\mu_{2}=\frac{\int_{G}|\nabla_{G} V|^{2}dxdt}{\int_{G} U^{\frac{4}{Q-2}}V^{2}dxdt}.
\end{equation}
Furthermore, since $V\perp U$, we  have
\begin{equation}\label{3.15}
\int_{G}\langle\nabla_{G} U, \nabla_{G} V\rangle
dxdt=0,\;\;\int_{G}
U^{\frac{4}{Q-2}}\cdot UVdxdt=\int_{G} U^{\frac{Q+2}{Q-2}}Vdxdt=0.
\end{equation}
Set
\[
\Phi(\epsilon)=\frac{\int_{G}|\nabla_{G} (U+\epsilon
V)|^{2}dxdt}{\left(\int_{G} |U+\epsilon
V|^{\frac{2Q}{Q-2}}dxdt\right)^{\frac{Q-2}{Q}}},\;\;\epsilon\in\mathbb{R}.
\]
By Theorem \ref{th1.2},  $U$ is an extremal function of
Folland-Stein inequality (\ref{1.4}). So we have  $\Phi'(0)=0$ and $\Phi''(0)\geq0$.  We compute
\begin{align*}
\Phi'(\epsilon)
=&2\frac{\int_{G}\langle\nabla_{G}
(U+\epsilon V), \nabla_{G} V\rangle
dxdt}{\left(\int_{G} |U+\epsilon
V|^{\frac{2Q}{Q-2}}dxdt\right)^{\frac{Q-2}{Q}}}-\\
&2\frac{\int_{G}|\nabla_{G}
(U+\epsilon V)|^{2}dxdt}{\left(\int_{G} |U+\epsilon
V|^{\frac{2Q}{Q-2}}dxdt\right)^{\frac{2Q-2}{Q}}}\int_{G} |U+\epsilon
V|^{\frac{4}{Q-2}}(U+\epsilon V)Vdxdt\\
=& \Phi_{1}(\epsilon) -\Phi_{2}(\epsilon),
\end{align*}
where
\begin{align*}
\Phi_{1}(\epsilon) =&2\frac{\int_{G}\langle\nabla_{G}
(U+\epsilon V), \nabla_{G} V\rangle
dxdt}{\left(\int_{G} |U+\epsilon
V|^{\frac{2Q}{Q-2}}dxdt\right)^{\frac{Q-2}{Q}}};\\
\Phi_{2}(\epsilon)=&2\frac{\int_{G}|\nabla_{G}
(U+\epsilon V)|^{2}dxdt}{\left(\int_{G} |U+\epsilon
V|^{\frac{2Q}{Q-2}}dxdt\right)^{\frac{2Q-2}{Q}}}\int_{G} |U+\epsilon
V|^{\frac{4}{Q-2}}(U+\epsilon V)Vdxdt.
\end{align*}
By using  (\ref{3.15}), we have
\begin{align*}
\Phi'_{1}(0) =&2\frac{\int_{G}|\nabla_{G} V|^{2}
dxdt}{\left(\int_{G}
|U|^{\frac{2Q}{Q-2}}dxdt\right)^{\frac{Q-2}{Q}}}-4\frac{\int_{G}\langle\nabla_{G}
U, \nabla_{G} V\rangle dxdt}{\left(\int_{G}
|U|^{\frac{2Q}{Q-2}}dxdt\right)^{\frac{2Q-2}{Q}}}\int_{G} U^{\frac{Q+2}{Q-2}}Vdxdt\\
=&2\frac{\int_{G}|\nabla_{G} V|^{2}
dxdt}{\left(\int_{G}
|U|^{\frac{2Q}{Q-2}}dxdt\right)^{\frac{Q-2}{Q}}};
\\
\Phi_{2}'(0) =&4\frac{\int_{G}\langle\nabla_{G} U, \nabla_{G}
V\rangle dxdt}{\left(\int_{G}
|U|^{\frac{2Q}{Q-2}}dxdt\right)^{\frac{2Q-2}{Q}}}\int_{G} U^{\frac{Q+2}{Q-2}}Vdxdt
\\
&-\frac{8(Q-1)}{Q-2}\frac{\int_{G}|\nabla_{G}
U|^{2}dxdt}{\left(\int_{G}
|U|^{\frac{2Q}{Q-2}}dxdt\right)^{\frac{3Q-2}{Q}}}\left(\int_{G}
U^{\frac{4}{Q-2}}V^{2}dxdt\right)^{2}\\
&+\frac{2(Q+2)}{Q-2}\frac{\int_{G}|\nabla_{G}
U|^{2}dxdt}{\left(\int_{G} U^{\frac{2Q}{Q-2}}dxdt\right)^{\frac{2Q-2}{Q}}}\int_{G} U^{\frac{4}{Q-2}}V^{2}dxdt\\
=&\frac{2(Q+2)}{Q-2}\frac{\int_{G}|\nabla_{G}
U|^{2}dxdt}{\left(\int_{G} U^{\frac{2Q}{Q-2}}dxdt\right)^{\frac{2Q-2}{Q}}}\int_{G} U^{\frac{4}{Q-2}}V^{2}dxdt.
\end{align*}
Therefore,
\begin{align*}
0\leq\Phi''(0)=&\Phi_{1}'(0)-\Phi_{2}'(0)\\
=&2\frac{\int_{G}|\nabla_{G} V|^{2}
dxdt}{\left(\int_{G}
|U|^{\frac{2Q}{Q-2}}dxdt\right)^{\frac{Q-2}{Q}}}-
\frac{2(Q+2)}{Q-2}\frac{\int_{G}|\nabla_{G}
U|^{2}dxdt}{\left(\int_{G} U^{\frac{2Q}{Q-2}}dxdt\right)^{\frac{2Q-2}{Q}}}\int_{G} U^{\frac{4}{Q-2}}V^{2}dxdt,
\end{align*}
i.e.
\begin{align}\label{3.16}
\frac{\int_{G}|\nabla_{G}
V|^{2}dxdt}{\int_{G} |U|^{\frac{4}{Q-2}}V^{2}dxdt}\geq \frac{Q+2}{Q-2}\frac{\int_{G}|\nabla_{G}
U|^{2}dxdt}{\int_{G} |U|^{\frac{2Q}{Q-2}}dxdt}.
\end{align}
Combing (\ref{3.14}) and (\ref{3.16}) yields
\[
\mu_{2}\geq \frac{Q+2}{Q-2}\frac{\int_{G}|\nabla_{G}
U|^{2}dxdt}{\int_{G} |U|^{\frac{2Q}{Q-2}}dxdt}=\frac{Q+2}{Q-2}\mu_{1}=\frac{m(Q+2)}{4}.
\]
On the other hand, by (\ref{2.5}), $\{\partial_{\lambda}U_{\lambda,\eta}|_{\lambda=1,\eta=0},\;\nabla_{\eta}U_{\lambda,\eta}|_{\lambda=1,\eta=0}\}$ are eigenfunctions of $\frac{m(Q+2)}{4}$.
So $\mu_{2}=\frac{m(Q+2)}{4}$. This completes the proof of Theorem \ref{th1.3}.

\section*{Acknowledgements}
The author wishes  to thank the referees for their very helpful comments and suggestions which improved the exposition of the paper.

\end{document}